\newcommand{\XYMATRIX}{\xymatrix@M=6pt}
\newcommand{\aremb}{\ar@{^{(}->}}
\newcommand{\arembfrom}{\ar@{<-^{)}}}
\numberwithin{equation}{section}
  \newtheorem{THM}{Theorem}[section]
  \newtheorem{LEM}[THM]{Lemma}
  \newtheorem{PROP}[THM]{Proposition}
  \newtheorem{COR}[THM]{Corollary}
  \newtheorem{EX}{Example}[section]
\newif\ifQEDsign
\newcommand{\QED}{\global\QEDsigntrue\hfill$\square$}
\newenvironment{proof}%
    {\par\noindent\textit{Proof.}\global\QEDsignfalse}%
    {\ifQEDsign\else\QED\fi\par\bigskip\par}
\newcommand{\quotient}[2]{\genfrac{[}{]}{0pt}{}{#1}{#2}}
\renewcommand{\preceq}{\preccurlyeq}
\renewcommand{\le}{\leqslant}
\renewcommand{\ge}{\geqslant}
\renewcommand{\leq}{\leqslant}
\newcommand{\0}{\varnothing}
\renewcommand{\sec}{\cap}
\renewcommand{\phi}{\varphi}
\renewcommand{\epsilon}{\varepsilon}
\newcommand{\CC}{\mathbf{C}}
\newcommand{\DD}{\mathbf{D}}
\newcommand{\NN}{\mathbb{N}}
\newcommand{\PP}{\mathbf{P}}
\newcommand{\union}{\cup}
\newcommand{\Boxed}[1]{\mbox{$#1$}}
\newcommand{\id}{\mathrm{id}}
\newcommand{\ID}{\mathrm{ID}}
\newcommand{\Ob}{\mathrm{Ob}}
\newcommand{\Surj}{\mathrm{Surj}}
\newcommand{\op}{\mathrm{op}}
\newcommand{\im}{\mathrm{im}}
\newcommand{\Con}{\mathrm{Con}}
\newcommand{\nat}{\mathrm{nat}}
\newcommand{\FinSetInj}{\mathbf{FinSet}_{\mathit{inj}}}
\newcommand{\FinSetSurj}{\mathbf{FinSet}_{\mathit{surj}}}
\newcommand{\FinBaSurj}{\mathbf{FinBA}_{\mathit{surj}}}
\newcommand{\calA}{\mathcal{A}}
\newcommand{\calB}{\mathcal{B}}
\newcommand{\calC}{\mathcal{C}}
\newcommand{\calJ}{\mathcal{J}}
\newcommand{\calK}{\mathcal{K}}
\newcommand{\calL}{\mathcal{L}}
\newcommand{\calM}{\mathcal{M}}
\newcommand{\calN}{\mathcal{N}}
\newcommand{\calO}{\mathcal{O}}
\newcommand{\calP}{\mathcal{P}}
\newcommand{\calQ}{\mathcal{Q}}
\newcommand{\calS}{\mathcal{S}}
\newcommand{\Fraisse}{Fra\"\i ss\'e}
\DeclareMathOperator{\Aut}{Aut}
\title{On the Dual Ramsey Property for Finite Distributive Lattices}
\author{%
  Dragan Ma\v sulovi\'c, Neboj\v sa Mudrinski\\
  University of Novi Sad, Faculty of Sciences\\
  Department of Mathematics and Informatics\\
  Trg Dositeja Obradovi\'ca 3, 21000 Novi Sad, Serbia\\
  e-mail: $\{$dragan.masulovic,nebojsa.mudrinski$\}$@dmi.uns.ac.rs}
\begin{document}
\maketitle

\begin{abstract}
  The class of finite distributive
  lattices, as many other natural classes of structures,
  does not have the Ramsey property. It is quite common,
  though, that after expanding the structures with appropriatelly chosen linear orders
  the resulting class has the Ramsey property. So, one might expect
  that a similar result holds for the class of all finite distributive lattices.
  Surprisingly, Kechris and Soki\'c have proved in 2012 that this is not the case:
  no expansion of the class of finite distributive lattices by linear orders satisfies the
  Ramsey property.

  In this paper we prove that
  the class of finite distributive lattices does not have the dual Ramsey property either.
  However, we are able to derive a dual Ramsey
  theorem for finite distributive lattices endowed with a particular linear order.

  \bigskip

  \noindent \textbf{Key Words:} dual Ramsey property, finite distributive lattices

  \noindent \textbf{AMS Subj.\ Classification (2010):} 05C55, 06D50
\end{abstract}

\section{Introduction}

The class of finite distributive lattices does not have the Ramsey property (see~\cite{Promel-Voigt}),
and this is not an exception:
many natural classes of structures (such as finite graphs, metric spaces and posets, just to
name a few) do not have the Ramsey property. It is quite common,
though, that after expanding the structures under consideration
with appropriatelly chosen linear orders, the resulting class of expanded
structures has the Ramsey property. For example, the class of all finite
linearly ordered graphs $(V, E, \Boxed\le)$ where $(V, E)$ is a finite graph and $\le$ is
a linear order on the set $V$ of vertices of the graph has the Ramsey property~\cite{AH,Nesetril-Rodl}.
The same is true for metric spaces~\cite{Nesetril-metric}.
In case of finite posets we consider the class of all
finite linearly ordered posets $(P, \Boxed\preceq, \Boxed\le)$ where $(P, \Boxed\preceq)$ is
a finite poset and $\le$ is a linear order on $P$ which extends~$\preceq$~\cite{Nesetril-Rodl}.
So, one might expect
that a similar result holds for the class of all finite distributive lattices.
Surprisingly, this is not the case. In~\cite{kechris-sokic} the authors prove that
no expansion of the class of finite distributive lattices by linear orders satisfies the
Ramsey property.

Using techniques developed in~\cite{masulovic-ramsey} we prove in this paper that
the class of finite distributive lattices does not have the dual Ramsey property either.
However, using the same techniques we are able to derive a dual Ramsey
theorem for finite distributive lattices endowed with a particular linear order which we refer to as
the \emph{natural order}.

Our approach is based on employing categorical equivalence. It has recently been established \cite{masulovic-ramsey} that
Ramsey property (formulated in the language of category theory) is preserved under categorical equivalence
(see Theorem~\ref{cerp.thm.dual}). In particular if $\CC$ and $\DD$ are dually equivalent categories then
$\CC$ has the Ramsey property if and only if $\DD$ has the dual Ramsey property. (For precise
notions and formulation of the statement see Section~\ref{cerp.sec.prelim}). By Birkhoff duality,
the category of finite posets is dually equivalent to the category of finite distributive lattices.
Since the class of finite posets does not have the Ramsey property, it follows immediately that the class of finite
distributive lattices does not have the dual Ramsey property.

Along the lines of Birkhoff duality, we develop in Section~\ref{cerp.sec.duality}
a duality between the category $\DD'$ whose objects are
naturally ordered finite distributive lattices and whose morphisms are
surjective $\{0,1\}$-homomorphisms that respect the additional linear order in a particular way,
and the category $\PP'$ whose objects are finite posets with a linear order that extends the poset
ordering, and whose morphisms are poset embeddings that preserve the additional linear order.
Now, the class of finite posets with a linear extension \emph{does have} the Ramsey property,
whence follows the dual Ramsey property for the class of finite distributive lattices endowed with a
particularly chosen linear order. The duality between the two categories yields the dual Ramsey property
for naturally ordered finite distributive lattices in terms of morphisms. At the end of the section
we reformulate the property in terms of special partitions of lattices.

\section{Preliminaries}
\label{cerp.sec.prelim}

\paragraph{Categories.}
In order to specify a category $\CC$ one has to specify
a class of objects $\Ob(\CC)$,
a set of morphisms $\hom_\CC(\calA, \calB)$ for all $\calA, \calB \in \Ob(\CC)$,
an identity morphism $\id_\calA$ for all $\calA \in \Ob(\CC)$, and
the composition of mor\-phi\-sms~$\cdot$~so that
\begin{itemize}
\item $(f \cdot g) \cdot h = f \cdot (g \cdot h)$, and
\item $\id_\calB \cdot f = f \cdot \id_\calA$ for all $f \in \hom_\CC(\calA, \calB)$.
\end{itemize}
Let $\Aut(\calA)$ denote the set of all invertible morphisms $\hom_\CC(\calA, \calA)$.
Recall that an object $\calA \in \Ob(\CC)$ is \emph{rigid} if $\Aut(\calA) = \{\id_\calA\}$.

Note that morphisms in $\hom_\CC(\calA, \calB)$ are not necessarily structure-preserving mappings
from $\calA$ to $\calB$, and that the composition
$\cdot$ in a category is not necessarily composition of mappings.
Instead of $\hom_\CC(\calA, \calB)$ we write $\hom(\calA, \calB)$ whenever $\CC$ is obvious from the context.

For a category $\CC$, the oposite category, denoted by $\CC^\op$, is the category whose objects
are the objects of $\CC$, morphisms are formally reversed so that
$
  \hom_{\CC^\op}(A, B) = \hom_\CC(B, A),
$
and so is the composition:
$
  f \cdot_{\CC^\op} g = g \cdot_\CC f.
$

In this paper we consider categories of finite first-order structures.
Base sets of structures $\calA$, $\calA_1$, $\calA^*$, \ldots\ will always be denoted by
$A$, $A_1$, $A^*$, \ldots\ respectively.
Given a structure $\calA = (A, \Delta)$ and a linear order $<$,
we write $\calA_<$ for the structure $(A, \Delta, \Boxed<)$.
Moreover, we shall always write $\calA$ to denote the corresponding reduct of~$\calA_<$.
Linear orders denoted by $<$, $\sqsubset$ etc.\ are irreflexive (strict linear orders), whereas
by $\le$, $\sqsubseteq$ etc.\ we denote the corresponding reflexive linear orders.

\paragraph{Equivalent and dually equivalent categories.}
Categories $\CC$ and $\DD$ are \emph{isomorphic} if there exist functors $E : \CC \to \DD$ and $H : \DD \to \CC$ such that
$H$ is the inverse of $E$. A functor $E : \CC \to \DD$ is isomorphism-dense if
for every $D \in \Ob(\DD)$ there is a $C \in \Ob(\CC)$ such that $E(C) \cong D$.
Categories $\CC$ and $\DD$ are \emph{equivalent} if there exist functors $E : \CC \to \DD$
and $H : \DD \to \CC$, and natural isomorphisms $\eta : \ID_\CC \to HE$ and $\epsilon : \ID_\DD \to EH$.
We say that $H$ is a pseudoinverse of $E$ and vice versa.
It is a well known fact that a functor $E : \CC \to \DD$ has a pseudoinverse if and only if it is full, faithful and isomorphism-dense.
Clearly, if $E$ has a pseudoinverse then $\CC$ and $\DD$ are equivalent.
A \emph{skeleton} of a category is a full, isomorphism-dense subcategory in which no two distinct objects are isomorphic.
It is easy to see that (assuming (AC)) every category has a skeleton. It is also a well known fact that two categories are
equivalent if and only if they have isomorphic skeletons.
Categories $\CC$ and $\DD$ are \emph{dually equivalent} if $\CC$ and $\DD^\op$ are equivalent.

\paragraph{Ramsey property for categories.}
We say that
$
  \calS = \calM_1 \union \ldots \union \calM_k
$
is a \emph{$k$-coloring} of $\calS$ if $\calM_i \sec \calM_j = \0$ whenever $i \ne j$.
Given a category $\CC$, define $\sim_\calA$ on
$\hom_\CC(\calA, \calB)$ as follows: for $f, f' \in \hom_\CC(\calA, \calB)$
we let $f \sim_\calA f'$ if $f' = f \cdot \alpha$
for some $\alpha \in \Aut(\calA)$. Then
$$
  {\binom \calB \calA}_\CC = \hom_\CC(\calA, \calB) / \Boxed{\sim_\calA}
$$
corresponds to all subobjects of $\calB$ isomorphic to $\calA$ (see~\cite{Nesetril,NR-2}).
For an integer $k \ge 2$ and $\calA, \calB, \calC \in \Ob(\CC)$ we write
$$
  \calC \longrightarrow (\calB)^{\calA}_k
$$
to denote that for every $k$-coloring
$
  {\binom \calC\calA}_\CC = \calM_1 \union \ldots \union \calM_k
$
there is an $i \in \{1, \ldots, k\}$ and a morphism $w \in \hom_\CC(\calB, \calC)$ such that
$w \cdot {\binom \calB\calA}_\CC \subseteq \calM_i$.
(Note that $w \cdot (f / \Boxed{\sim_\calA}) = (w \cdot f) / \Boxed{\sim_\calA}$ for $f / \Boxed{\sim_\calA}
\in {\binom\calB\calA}_\CC$.)  We write
$$
  \calC \overset{\mathit{hom}}\longrightarrow (\calB)^{\calA}_k
$$
to denote that for every $k$-coloring
$
  \hom_\CC(\calA, \calC) = \calM_1 \union \ldots \union \calM_k
$
there is an $i \in \{1, \ldots, k\}$ and a morphism $w \in \hom_\CC(\calB, \calC)$ such that
$w \cdot \hom_\CC(\calA, \calB) \subseteq \calM_i$.

A category $\CC$ has the \emph{Ramsey property for objects} if
for every integer $k \ge 2$ and all $\calA, \calB \in \Ob(\CC)$
such that $\hom_\CC(\calA, \calB) \ne \0$ there is a
$\calC \in \Ob(\CC)$ such that $\calC \longrightarrow (\calB)^{\calA}_k$.
A category $\CC$ has the \emph{Ramsey property for morphisms} if
for every integer $k \ge 2$ and all $\calA, \calB \in \Ob(\CC)$
such that $\hom_\CC(\calA, \calB) \ne \0$ there is a
$\calC \in \Ob(\CC)$ such that $\calC \overset{\mathit{hom}}\longrightarrow (\calB)^{\calA}_k$.

In a category of finite ordered structures all the relations
$\sim_\calA$ are trivial and the two Ramsey properties introduced above coincide.
Therefore, we say that a category of finite ordered structures and embeddings has the \emph{Ramsey property}
if it has the Ramsey property for morphisms. (In general, the two notions do not coincide, but are
nevertheless closely related; see~\cite{zucker} and~\cite{masulovic-ramsey} for a short proof.)

\begin{EX}\label{cerp.ex.FRP}
    The category $\FinSetInj$ of finite sets and injective maps has the Ramsey property for objects.
    This is just a reformulation of the Finite Ramsey Theorem:

  \begin{THM}\label{cerp.thm.FRT} \cite{Ramsey}
    For all positive integers $k$, $a$, $m$ there is a positive integer $n$ such that
    for every $n$-element set $C$ and every $k$-coloring of the set $\binom Ca$ of all $a$-element subsets of $C$
    there is an $m$-element subset $B$ of $C$ such that $\binom Ba$ is monochromatic.
  \end{THM}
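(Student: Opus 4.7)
The plan is to prove Theorem~\ref{cerp.thm.FRT} by induction on the parameter $a$, with the number of colors $k$ carried along uniformly. Write $R(a, m, k)$ for the least $n$ that witnesses the statement. The base case $a = 1$ is just the pigeonhole principle: if $|C| \ge k(m-1)+1$ and each element of $C$ is painted in one of $k$ colors, some color class has size at least $m$, so $R(1, m, k) \le k(m-1)+1$.

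For the inductive step, suppose the theorem holds at level $a-1$ for every $m$ and every $k$. Given a $k$-coloring $\chi$ of $\binom{C}{a}$ with $|C|$ sufficiently large, I would carry out a sequential trimming construction. Pick $x_1 \in C$ and set $C_1 = C \setminus \{x_1\}$; the coloring $\chi$ induces a $k$-coloring $\chi_1$ of $\binom{C_1}{a-1}$ via $\chi_1(S) = \chi(\{x_1\} \cup S)$. By the inductive hypothesis applied at level $a-1$ (with a suitably large target parameter), there is a subset $C_1' \subseteq C_1$ on which $\chi_1$ is constant with some color $c_1$, i.e.\ every $a$-set $\{x_1\} \cup S$ with $S \in \binom{C_1'}{a-1}$ is painted $c_1$ by $\chi$. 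Repeat: pick $x_2 \in C_1'$, apply the same construction inside $C_1' \setminus \{x_2\}$ to extract $C_2'$ and a color $c_2$, and iterate.

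After $T$ iterations I obtain points $x_1, \ldots, x_T \in C$ and colors $c_1, \ldots, c_T \in \{1, \ldots, k\}$ with the key property that for any indices $i < j_2 < \cdots < j_a$ we have $\chi(\{x_i, x_{j_2}, \ldots, x_{j_a}\}) = c_i$. Choosing $T \ge k(m-1)+1$ allows one last pigeonhole: some color $c^*$ appears at least $m$ times in the sequence $c_1, \ldots, c_T$, say at positions $i_1 < \cdots < i_m$, and then $B = \{x_{i_1}, \ldots, x_{i_m}\}$ is an $m$-element subset of $C$ with $\binom{B}{a}$ monochromatic of color $c^*$.

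The main obstacle is purely bookkeeping: one has to guarantee that each trimming step leaves enough room for the next invocation of the inductive hypothesis. I would handle this by choosing the target sizes in reverse, starting from the requirement $|C_T'| \ge a-1$, and defining $|C_i'| = R(a-1, |C_{i+1}'| + 1, k)$ downward to $|C_0'| = n$. This produces an explicit (tower-type) bound, but since the theorem asserts only the existence of $n$, finitely many applications of the inductive hypothesis suffice, and reducing from arbitrary $k$ to two colors is not needed because the induction on $a$ already accommodates any fixed number of colors.
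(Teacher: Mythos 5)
Your argument is correct: it is the standard proof of the finite Ramsey theorem by induction on $a$, building a ``pre-homogeneous'' sequence $x_1,\ldots,x_T$ in which the color of an $a$-set depends only on its least-indexed element, followed by a pigeonhole on the colors $c_1,\ldots,c_T$. The paper itself offers no proof of this statement --- it is quoted as Ramsey's classical theorem with a citation to \cite{Ramsey} --- so there is no authorial argument to compare against; your write-up (including the reverse bookkeeping of target sizes, which works since $R(a-1,t+1,k)\ge R(a-1,t,k)+1$, and the tacit convention that $\binom{B}{a}$ is vacuously monochromatic when $m<a$) is a complete and standard justification.
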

\end{EX}

A category $\CC$ has the \emph{dual Ramsey property for objects (morphisms)} if
$\CC^\op$ has the Ramsey property for objects (morphisms).

\begin{EX}\label{cerp.ex.FDRT}
    The category $\FinSetSurj$ of finite sets and surjective maps has the dual Ramsey property for objects.
    This is just a reformulation of the Finite Dual Ramsey Theorem:

  \begin{THM}\label{cerp.thm.FDRT} \cite{GR}
    For all positive integers $k$, $a$, $m$ there is a positive integer $n$ such that
    for every $n$-element set $C$ and every $k$-coloring of the set $\quotient Ca$ of all partitions of
    $C$ with exactly $a$ blocks there is a partition $\beta$ of $C$ with exactly $m$ blocks such that
    the set of all patitions from $\quotient Ca$ which are coarser than $\beta$ is monochromatic.
  \end{THM}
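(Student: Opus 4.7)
The plan is to derive the Finite Dual Ramsey Theorem from the Graham--Rothschild theorem on parameter words, which is the natural combinatorial framework for handling partitions of a finite set under refinement.

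The first step is to encode partitions as certain ``words.'' A partition of $C = [N]$ with exactly $n$ blocks corresponds bijectively to a surjection $C \to [n]$ up to the action of $\Sym(n)$ on the codomain; choosing the canonical representative in which the blocks are numbered by the order of appearance of their least elements gives exactly what is called an $n$-parameter word of length $N$ over the empty alphabet, that is, a sequence $w \in \{\lambda_1, \ldots, \lambda_n\}^N$ in which each $\lambda_i$ appears and the first occurrences come in the order $\lambda_1, \lambda_2, \ldots, \lambda_n$. Under this bijection, a partition $\alpha$ with $a$ blocks is coarser than a partition $\beta$ with $m$ blocks precisely when the parameter word representing $\alpha$ can be written as the composition $u \circ w$, where $w$ is the $m$-parameter word of $\beta$ and $u$ is some $a$-parameter word of length $m$; composition here means substituting for each $\lambda_j$ in $w$ the corresponding letter or parameter of $u$ (after rewriting to canonical form).

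The second step is to quote the Graham--Rothschild theorem for parameter words: for every alphabet, every $k, a, m$ there is an $N$ such that every $k$-coloring of the set of $a$-parameter words of length $N$ admits an $m$-parameter word $w$ of length $N$ for which all compositions $u \circ w$, as $u$ ranges over $a$-parameter words of length $m$, receive the same color. Applied to the empty alphabet, this specializes exactly to the statement of the theorem: the monochromatic family indexed by $u$ is precisely the family of all $a$-block partitions of $C$ coarser than the $m$-block partition corresponding to $w$.

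The hard part is Graham--Rothschild itself, which is the real content; I would prove it by induction on $a$ with $k$ and $m$ as parameters, using the Hales--Jewett theorem as the base case ($a = 1$) and as the engine of the inductive step, combined with a standard ``product Ramsey'' pigeon-hole argument to absorb one additional parameter at a time. The obstacle to clean book-keeping is that composition of parameter words interacts non-trivially with the ``canonical order'' convention: applying a morphism to a parameter word and then renormalising can permute the parameters, so one must either work with equivalence classes throughout or show that Hales--Jewett can be invoked in an orbit-respecting way. Once this bookkeeping is in place, the inductive step goes through and the Finite Dual Ramsey Theorem follows.
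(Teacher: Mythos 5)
The paper does not prove this theorem at all: it is quoted as a known result of Graham and Rothschild \cite{GR}, and the surrounding text of Example~\ref{cerp.ex.FDRT} only verifies that the categorical statement ``$\FinSetSurj$ has the dual Ramsey property for objects'' is a reformulation of it. So there is no in-paper proof to compare yours against. Your first two steps are correct and are in fact exactly how the result lives in the literature: an $a$-block partition of an $N$-element set is the same thing as an $a$-parameter word of length $N$ over the empty alphabet (the canonical surjection with blocks numbered by least elements), coarsening corresponds to composition of parameter words, and the set of partitions in $\quotient Ca$ coarser than a fixed $m$-block $\beta$ is exactly $\{u\circ w : u \text{ an $a$-parameter word of length } m\}$. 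The Finite Dual Ramsey Theorem is precisely the Graham--Rothschild parameter-set theorem specialised to the empty alphabet, so that reduction is sound and complete.

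The one reservation concerns your final paragraph. The induction you describe does not work if run over the empty alphabet alone: there the case $a=1$ is trivial (there is a unique one-block partition) and Hales--Jewett says nothing, so it cannot serve as the base case in the way you suggest. The standard proof of Graham--Rothschild proves the statement for all finite alphabets simultaneously; the inductive step from $a$ to $a+1$ over an alphabet $A$ trades a parameter for a letter and therefore invokes the $a$-parameter case over a strictly larger alphabet, bottoming out in Hales--Jewett over an $a$-letter alphabet. Thus even the empty-alphabet (partition) case genuinely requires the nonempty-alphabet machinery, and the step is an induced-colouring/amalgamation argument rather than a product pigeonhole. You correctly flag the renormalisation bookkeeping as a real issue. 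Since the paper itself treats the theorem as a black box from \cite{GR}, your identification of it as the empty-alphabet Graham--Rothschild theorem is the right answer at the level the paper operates; just be aware that the sketch of the Graham--Rothschild induction, as literally stated, would need this repair.
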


    To show that this is indeed the case, let $\CC = \FinSetSurj$. For $A, B \in \Ob(\CC)$
    let $\Surj(B, A)$ denote the set of all surjective maps $B \twoheadrightarrow A$.
    Define $\equiv_A$ on $\Surj(B, A)$ as follows: for $f, f' \in \Surj(B, A)$ we let
    $f \equiv_A f'$ if $f' = \alpha \circ f$ for some bijection~$\alpha : A \to A$.

    The claim that $\CC^\op$ has the Ramsey property for objects means that
    for every integer $k \ge 2$ and all $A, B \in \Ob(\CC)$
    such that $\hom_{\CC^\op}(A, B) \ne \0$ there is a $C \in \Ob(\CC)$ such that for every $k$-coloring
    $
      {\binom CA}_{\CC^\op} = \calM_1 \union \ldots \union \calM_k
    $
    there is an $i \in \{1, \ldots, k\}$ and a morphism $w \in \hom_{\CC^\op}(B, C)$ such that
    $w \cdot {\binom BA}_{\CC^\op} \subseteq \calM_i$. Since
    \begin{align*}
      f \cdot g \text{ (in $\CC^\op$)} &= g \circ f\\
      \hom_{\CC^\op}(A, B) &= \hom_\CC(B, A) = \Surj(B, A)\\
      {\binom BA}_{\CC^\op} &= \hom_{\CC^\op}(A, B) \text{ factored by } \sim_A \text{ in } \CC^\op\\
                                    &= \hom_\CC(B, A) \text{ factored by } \equiv_A \text{ in } \CC\\
                                    &= \Surj(B, A) / \Boxed{\equiv_A},
    \end{align*}
    the fact that $\CC^\op$ has the Ramsey property for objects reads as follows:
    for every integer $k \ge 2$ and all finite sets $A$ and $B$
    such that $\Surj(B, A) \ne \0$ there is a finite set
    $C$ such that for every $k$-coloring
    $$
      \Surj(C, A) / \Boxed{\equiv_A} \; = \; \calM_1 \union \ldots \union \calM_k
    $$
    there is an $i \in \{1, \ldots, k\}$ and a surjective mapping
    $w \in \Surj(C, B)$ satisfying
    $(\Surj(B, A) / \Boxed{\equiv_A}) \circ w \subseteq \calM_i$.
    (Note that $(f / \Boxed{\equiv_A}) \circ w = (f \circ w) / \Boxed{\equiv_A}$ for $f / \Boxed{\equiv_A} \in
    \Surj(B, A) / \equiv_A$.)
    Since $\Surj(B, A) / \Boxed{\equiv_A}$ corresponds to partitions of $B$ into $|A|$-many blocks
    we see that the categorical statement is indeed a reformulation of Theorem~\ref{cerp.thm.FDRT}.
\end{EX}

Our main tool to derive a new Ramsey-type result for
finite distributive lattices is the following result from \cite{masulovic-ramsey}.

\begin{THM}\label{cerp.thm.dual} \cite{masulovic-ramsey}
  Let $\CC$ and $\DD$ be equivalent categories.
  Then $\CC$ has the Ramsey property for objects (morphisms) if and only if $\DD$ does.

  In particular if $\CC$ and $\DD$ are dually equivalent then
  $\CC$ has the dual Ramsey property for objects (morphisms) if and only if $\DD$ has
  the Ramsey property for objects (morphisms).
\end{THM}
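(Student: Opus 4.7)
Since equivalence of categories is symmetric and since $\CC \simeq \DD^\op$ implies $\CC^\op \simeq (\DD^\op)^\op = \DD$, it suffices to prove a single direction of the first assertion: if there is a full, faithful, isomorphism-dense functor $E : \CC \to \DD$ (with pseudoinverse $H : \DD \to \CC$ and natural isomorphism $\epsilon : \ID_\DD \to EH$) and $\CC$ has the Ramsey property for objects (morphisms), then so does $\DD$. The dual statement will then follow by applying this direct version with $\CC^\op$ and $\DD$ in place of $\CC$ and $\DD$.

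First I would record the standard consequences of full faithfulness: for every $\calA, \calB \in \Ob(\CC)$ the map $E : \hom_\CC(\calA, \calB) \to \hom_\DD(E\calA, E\calB)$ is a composition-preserving bijection that sends $\id_\calA$ to $\id_{E\calA}$; it restricts to a group isomorphism $\Aut(\calA) \to \Aut(E\calA)$; and it therefore descends to a bijection $\binom{\calB}{\calA}_\CC \to \binom{E\calB}{E\calA}_\DD$ on the $\sim$-quotients. From this a routine verification yields the functorial transfer of Ramsey arrows: $\calC \longrightarrow (\calB)^{\calA}_k$ in $\CC$ implies $E\calC \longrightarrow (E\calB)^{E\calA}_k$ in $\DD$, and similarly for $\overset{\mathit{hom}}\longrightarrow$. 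Alongside this I would prove an isomorphism-invariance lemma: if $A \cong A'$, $B \cong B'$, $C \cong C'$ in $\DD$, then $C \longrightarrow (B)^{A}_k$ iff $C' \longrightarrow (B')^{A'}_k$ (and likewise for $\overset{\mathit{hom}}\longrightarrow$); this is immediate because pre- and post-composition with fixed isomorphisms are composition-compatible bijections of hom-sets and pass to the $\sim$-quotients.

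Combining these ingredients, fix $A, B \in \Ob(\DD)$ with $\hom_\DD(A, B) \ne \0$ and some $k \ge 2$, and put $\calA = HA$, $\calB = HB$. The natural isomorphism $\epsilon$ together with the bijection induced by $E$ gives $\hom_\CC(\calA, \calB) \cong \hom_\DD(A, B) \ne \0$. The Ramsey property in $\CC$ yields $\calC$ with $\calC \longrightarrow (\calB)^\calA_k$; the functorial transfer produces $E\calC \longrightarrow (E\calB)^{E\calA}_k$ in $\DD$; and the isomorphism-invariance lemma, invoked via $\epsilon_A : EHA \to A$ and $\epsilon_B : EHB \to B$, upgrades this to $E\calC \longrightarrow (B)^{A}_k$. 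I expect the only real difficulty to be the bookkeeping for the object version: one must check that a $k$-coloring of $\binom{E\calC}{A}_\DD$ pulls back along $\epsilon_A^{-1}$ and then along $E^{-1}$ to a well-defined $k$-coloring of $\binom{\calC}{\calA}_\CC$, and that the resulting $\CC$-witness, pushed forward by $E$ and post-composed with $\epsilon_B^{-1}$, yields a valid $\DD$-witness in $\hom_\DD(B, E\calC)$. All of this ultimately reduces to the naturality of $\epsilon$ together with the group isomorphism $\Aut(\calA) \cong \Aut(A)$ induced by $E$ and $\epsilon$.
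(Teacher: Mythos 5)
Your proof is correct. The paper does not actually prove this theorem --- it is quoted from \cite{masulovic-ramsey} --- so there is no in-text argument to compare against; your route (transfer of the Ramsey arrow along a full, faithful, isomorphism-dense functor, an isomorphism-invariance lemma for the arrow, and passage to opposite categories for the ``in particular'' clause) is the standard argument and is essentially the one given in that reference. The delicate points are all accounted for: fullness guarantees that the pulled-back coloring covers all of ${\binom{\calC}{\calA}}_\CC$, and the induced isomorphism $\Aut(\calA) \cong \Aut(E\calA)$ makes $E$ well defined and bijective on the $\sim$-classes (the only cosmetic slip is writing $\epsilon_A : EHA \to A$ rather than $\epsilon_A : A \to EHA$, which is immaterial since only the existence of the isomorphism is used).
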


\begin{EX}
    The category $\FinSetInj$ of finite sets and injective maps is dually equivalent to the category
    $\FinBaSurj$ of finite boolean algebras and surjective homomorphisms (Stone duality).
    Since $\FinSetInj$ has the Ramsey property for objects (Example~\ref{cerp.ex.FRP}), it follows that
    the category $\FinBaSurj$ has the dual Ramsey property for objects.

    Let us make this statement explicit. Let $\CC = \FinBaSurj$. For $\calA, \calB \in \Ob(\CC)$
    let $\Surj(\calB, \calA)$ denote the set of all surjective homomorphisms $\calB \twoheadrightarrow \calA$.
    Define $\equiv_\calA$ on $\Surj(\calB, \calA)$ as follows: for $f, f' \in \Surj(\calB, \calA)$ we let
    $f \equiv_\calA f'$ if $f' = \alpha \circ f$ for some $\alpha \in \Aut(\calA)$.

    As in the Example~\ref{cerp.ex.FDRT},
    the fact that $\CC^\op$ has the Ramsey property for objects takes the following form:
    for every integer $k \ge 2$ and all finite boolean algebras $\calA$ and $\calB$
    such that $\Surj(\calB, \calA) \ne \0$ there is a finite boolean algebra
    $\calC$ such that for every $k$-coloring
    $$
      \Surj(\calC, \calA) / \Boxed{\equiv_\calA} \; = \; \calM_1 \union \ldots \union \calM_k
    $$
    there is an $i \in \{1, \ldots, k\}$ and a surjective homomorphism
    $w \in \Surj(\calC, \calB)$ satisfying
    $(\Surj(\calB, \calA) / \Boxed{\equiv_\calA}) \circ w \subseteq \calM_i$.
    Since $\Surj(\calB, \calA) / \Boxed{\equiv_\calA}$ corresponds to congruences $\Phi$ of $\calB$
    such that $\calB / \Phi \cong \calA$ the above statement
    can be reformulated as follows:
    \begin{quote}
      Let $\Con(\calB)$ denote the set of congruences of an algebra $\calB$, and
      for algebras $\calA$ and $\calB$ of the same type let
      $$
        \Con(\calB, \calA) = \{ \Phi \in \Con(\calB) : \calB / \Phi \cong \calA \}.
      $$
      For every finite bolean algebra $\calB$, every $\Phi \in \Con(\calB)$ and
      every $k \ge 2$ there is a finite boolean algebra $\calC$ such that for every
      $k$-coloring of $\Con(\calC, \calB / \Phi)$ there is a congruence
      $\Psi \in \Con(\calC, \calB)$ such that the set of all the congruences from
      $\Con(\calC, \calB / \Phi)$ which contain $\Psi$ is monochromatic.
    \end{quote}
\end{EX}

\paragraph{Birkhoff duality for finite distributive lattices.}
Let us recall some basic facts about the Birkhoff duality between the category $\DD$ of
finite distributive lattices with $\{0,1\}$-homomorphisms and the category $\PP$ of finite posets with poset homomorphisms.
For every finite distributive lattice $\calL$, let $J(\calL)$ be the set of all join-irreducible elemets of $\calL$ and let
$\calJ(\calL)$ denote the poset of its join irreducible elements.
On the other hand, for a poset $\calQ$ let $O(\calQ)$ denote the set of all down-sets of $\calQ$ and let
$\calO(\calQ)$ denote the lattice of down-sets of $\calQ$ with set-theoretic
union and intersection as lattice operations and $\0$ and $Q$ as the bottom and top element.
Then $\calJ : \DD \to \PP$ and $\calO : \PP \to \DD$ are contravariant functors whose behaviour on morphisms is
given as follows. For a $\{0,1\}$-lattice homomorphism $f : \calL \to \calK$ we have $\calJ(f) : \calJ(\calK) \to \calJ(\calL)$
where $\calJ(f)(y) = \bigwedge f^{-1}(\uparrow_\calK y)$. (Note that in case $f$ is surjective we have
$\calJ(f)(y) = \bigwedge f^{-1}(y)$.)
On the other hand, for a poset homomorphism
$\phi : \calP \to \calQ$ we have $\calO(\phi) : \calO(\calQ) \to \calO(\calP)$ where $\calO(\phi)(U) = \phi^{-1}(U)$.
Moreover, $\calJ$ and $\calO$ constitute a dual equivalence between $\DD$ and $\PP$ (see~\cite{davey-priestley} for details).
Recall that for every finite distributive lattice $\calL$ the natural isomorphism $\eta_\calL : \calL \to \calO(\calJ(\calL))$
is given by $x \mapsto \downarrow_{J(\calL)} x$. Analogously,
for every finite poset $\calQ$ the natural isomorphism $\epsilon_\calQ : \calQ \to \calJ(\calO(\calQ))$
is given by $x \mapsto \downarrow_{\calQ} x$.

\section{Birkhoff-type duality for naturally ordered finite distributive lattices}
\label{cerp.sec.duality}

As an immediate corollary of Theorem~\ref{cerp.thm.dual} we get that the class of finite
distributive lattices does not have the dual Ramsey property:

\begin{THM}
  The category $\DD_{\text{surj}}$ of finite distributive lattices and surjective
  lattice homomorphisms does not have the dual Ramsey property.
\end{THM}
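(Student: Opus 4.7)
The plan is to apply Theorem~\ref{cerp.thm.dual} together with Birkhoff duality. The key observation is that the contravariant functors $\calJ : \DD \to \PP$ and $\calO : \PP \to \DD$ recalled at the end of Section~\ref{cerp.sec.prelim} restrict to a dual equivalence between $\DD_{\text{surj}}$ and the category $\PP_{\text{emb}}$ of finite posets with poset embeddings (that is, injective, order-preserving, and order-reflecting maps). Granting this restricted duality, Theorem~\ref{cerp.thm.dual} reduces the claim to the fact that $\PP_{\text{emb}}$ does \emph{not} have the Ramsey property -- a well-known result, and indeed the very reason why the classical Ramsey-type theorems for finite posets recalled in the introduction are stated for posets expanded by a linear extension rather than for bare posets.

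The first step is to verify that the Birkhoff functors really do restrict as claimed. If $f : \calL \twoheadrightarrow \calK$ is a surjective $\{0,1\}$-homomorphism then, as noted after the definition of $\calJ$, $\calJ(f)(y) = \bigwedge f^{-1}(y)$ for every $y \in J(\calK)$, and a short computation -- using that $f$ is a lattice homomorphism and that distinct join-irreducibles of $\calK$ have distinct minimal preimages under $f$ -- shows that $\calJ(f)$ is both injective and order-reflecting. Conversely, for a poset embedding $\phi : \calP \hookrightarrow \calQ$, the $\{0,1\}$-homomorphism $\calO(\phi)(U) = \phi^{-1}(U)$ is surjective, since every down-set $V \subseteq \calP$ equals $\phi^{-1}(\downarrow_\calQ \phi(V))$. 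Composition is respected in both directions, and together with the natural isomorphisms $\eta$ and $\epsilon$ recorded at the end of Section~\ref{cerp.sec.prelim} this yields the restricted dual equivalence $\DD_{\text{surj}} \simeq \PP_{\text{emb}}^{\op}$.

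The second step is to invoke the failure of the Ramsey property for $\PP_{\text{emb}}$. This is a standard fact, derivable either by direct combinatorial means or via the Kechris-Pestov-Todorcevic correspondence applied to the \Fraisse\ limit of finite posets, whose automorphism group is not extremely amenable.

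The main obstacle is not deep; it is the bookkeeping of the first step, in particular ensuring that surjective lattice homomorphisms correspond \emph{exactly} to poset embeddings under $\calJ$ and not merely to injective order-preserving maps. Once the restricted dual equivalence $\DD_{\text{surj}} \simeq \PP_{\text{emb}}^{\op}$ is secured, Theorem~\ref{cerp.thm.dual} directly yields the non-existence of the dual Ramsey property for $\DD_{\text{surj}}$.
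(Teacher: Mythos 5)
Your proposal is correct and takes essentially the same route as the paper: the paper's proof is exactly the observation that $\DD_{\text{surj}}$ and $\PP_{\text{emb}}$ are dually equivalent via Birkhoff duality, so Theorem~\ref{cerp.thm.dual} transfers the known failure of the Ramsey property for finite posets with embeddings. The only difference is that you spell out the (routine) verification that the Birkhoff functors restrict to surjections and embeddings, which the paper takes for granted.
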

\begin{proof}
  It is well known that the category $\PP_{\text{emb}}$ of finite posets and poset embeddings
  does not have the Ramsey property~\cite{Nesetril,sokic}.
  Therefore, by Theorem~\ref{cerp.thm.dual}, $\DD_{\text{surj}}$ cannot have the dual Ramesy
  property since $\PP_{\text{emb}}$ and $\DD_{\text{surj}}$ are dually
  equivalent (Birkhoff duality).
\end{proof}

Our aim in this section is to expand finite distributive lattices by appropriatelly chosen linear orders
and to derive the dual Ramsey theorem for such expanded lattices. The expansion we propose is
analogous to the expansion of finite boolean algebras by linear orders proposed in~\cite{KPT}.

Let $\calL = (L, \Boxed\lor, \Boxed\land, 0, 1)$ be a finite distributive lattice, let $<$ be the lattice order
and let $J(\calL) = \{j_1, j_2, \ldots, j_n\}$.
Recall that every lattice element $x$ has the unique representation
$x = \delta_1 \cdot j_{1} \lor \delta_2 \cdot j_{2} \lor \ldots \lor \delta_n \cdot j_{n}$
where $\delta_i \in \{0, 1\}$ satisfies $\delta_i = 1$ if and only if $j_i \le x$,
and with the convention that $0 \cdot z = 0$ while $1 \cdot z = z$, $z \in L$.
Let $\sqsubset^0$ be a linear order on $J(\calL)$ which extends $<$ (that is, $j < j' \Rightarrow
j \mathrel{\sqsubset^0} j'$ for all $j, j' \in J(\calL)$). For the sake of notation let
$j_1 \mathrel{\sqsubset^0} j_2 \mathrel{\sqsubset^0} \ldots \mathrel{\sqsubset^0} j_n$.
The linear ordering of the join-irreducible elements of $\calL$ now induces a linear order on the whole of $\calL$
as follows. Take any $x, y \in L$, and let $x = \delta_1 \cdot j_{1} \lor \delta_2 \cdot j_{2} \lor \ldots \lor \delta_n \cdot j_{n}$
and $y = \epsilon_1 \cdot j_{1} \lor \epsilon_2 \cdot j_{2} \lor \ldots \lor \epsilon_n \cdot j_{n}$
be the unique representations of $x$ and $y$, respectively, where $\epsilon_s, \delta_s \in \{0, 1\}$.
We then say that $x \mathrel{\sqsubset} y$ if there is an $s$ such that $\delta_s < \epsilon_s$,
and $\delta_t = \epsilon_t$ for all $t > s$. In other words, $\sqsubset$ is the
\emph{antilexicographic ordering of the elements of $\calL$} with respect to~$\sqsubset^0$.

A linear ordering $\sqsubset$ of a finite distributive lattice $\calL$ will be referred to as
\emph{natural} if there is a linear ordering $\sqsubset^0$ on $J(\calL)$ which extends the lattice ordering
on $J(\calL)$ and $\sqsubset$ is the antilexicographic ordering of the elements of $\calL$ with respect to~$\sqsubset^0$.
It is easy to see that every natural linear ordering extends the lattice order $<$ on the entire $\calL$:

\begin{LEM}
  The natural linear ordering of a finite distributive lattice is a linear extension of the lattice ordering.
\end{LEM}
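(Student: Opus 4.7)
The plan is to unpack the definition of the natural ordering and observe that the map sending a lattice element to its characteristic vector of join-irreducibles is order-preserving; the antilexicographic ordering on $\{0,1\}^n$ then refines the coordinatewise order, which is enough.

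More precisely, fix a natural linear ordering $\sqsubset$ of $\calL$ induced by an extension $\sqsubset^0$ of the lattice order on $J(\calL)$, with $j_1 \sqsubset^0 j_2 \sqsubset^0 \ldots \sqsubset^0 j_n$. Take $x, y \in L$ with $x < y$, and write
$x = \delta_1 \cdot j_1 \lor \ldots \lor \delta_n \cdot j_n$, $y = \epsilon_1 \cdot j_1 \lor \ldots \lor \epsilon_n \cdot j_n$
for their unique representations. The first key observation is that $\delta_i = 1$ iff $j_i \le x$ and $\epsilon_i = 1$ iff $j_i \le y$, so the assumption $x < y$ immediately gives $\delta_i \le \epsilon_i$ for every $i$. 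Moreover $x \ne y$ implies that at least one of these inequalities is strict; in particular the set $S = \{i : \delta_i \ne \epsilon_i\}$ is nonempty.

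Let $s = \max S$. Since $\delta_s \le \epsilon_s$ and $\delta_s \ne \epsilon_s$, we have $\delta_s < \epsilon_s$, i.e.\ $\delta_s = 0$ and $\epsilon_s = 1$. By the maximality of $s$, for every $t > s$ we have $\delta_t = \epsilon_t$. By the definition of the antilexicographic order this is precisely the statement $x \sqsubset y$, so $\sqsubset$ extends $<$.

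There is no real obstacle here: the only thing that could go wrong is a mismatch between which end of the index sequence the antilex order examines first, but the definition in the text compares the largest differing coordinate, which matches the direction in which $\sqsubset^0$ dominates. Notice that no use is made of the hypothesis that $\sqsubset^0$ extends the lattice order on $J(\calL)$; any linear order on $J(\calL)$ would give, via the antilex construction, a linear extension of $<$ on $\calL$.
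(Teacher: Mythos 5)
Your proof is correct and follows essentially the same route as the paper's: both arguments reduce $x<y$ to coordinatewise domination of the $\{0,1\}$-vectors of join-irreducibles and then take the largest index where the coordinates differ. Your closing observation that the hypothesis ``$\sqsubset^0$ extends the lattice order on $J(\calL)$'' is never used is accurate (the paper's proof does not use it either); that hypothesis matters elsewhere in the paper, not for this lemma.
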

\begin{proof}
  Let $\calL$ be a finite distributive lattice with the lattice order~$<$ and
  let $\sqsubset$ be a natural linear order on $\calL$. For the sake of notation let
  $J(\calL) = \{j_1 \mathrel{\sqsubset} j_2 \mathrel{\sqsubset} \ldots \mathrel{\sqsubset} j_n\}$.
  Take any $x, y \in L$ such that $x < y$ and let us show that $x \sqsubset y$. This trivially holds if
  $x = 0$ because $0$ is the bottom element with respect to both $<$ and $\sqsubset$.
  Assume, therefore, that $x > 0$.

  Let $x = \delta_1 \cdot j_{1} \lor \delta_2 \cdot j_{2} \lor \ldots \lor \delta_n \cdot j_{n}$
  and $y = \epsilon_1 \cdot j_{1} \lor \epsilon_2 \cdot j_{2} \lor \ldots \lor \epsilon_n \cdot j_{n}$
  be the unique representations of $x$ and $y$, respectively, where $\epsilon_s, \delta_s \in \{0, 1\}$.
  Since $x < y$ there is a $j \in J(\calL)$ such that $j \le y$
  and $j \not\le x$, so let $s = \max\{i \in \{1, \ldots, n\} : j_i \le y \text{ and } j_i \not\le x\}$.
  Then $\delta_t = \epsilon_t$ for all $t > s$, while $\delta_s = 0 < 1 = \epsilon_s$.
  This concludes the proof that $x \sqsubset y$.
\end{proof}

Let $\calL$ be a finite distributive lattice and $\sqsubset$ a natural linear order on $\calL$.
The structure $(\calL, \Boxed\sqsubset)$ will shortly be denoted by~$\calL_\sqsubset$.
Analogously, if $\calP$ is a poset and $\prec$ is a linear order extending the ordering relation
of~$\calP$, the structure $(\calP, \Boxed\prec)$ will be referred to as
a \emph{linearly ordered poset} and denoted by~$\calP_\prec$.
Note that for a linearly ordered poset $\calP_\prec$ the linear order $\prec$
uniquely extends from $\calP$, which are the join-irreducibles of $\calO(\calP)$, to an
antilexicographic ordering of $\calO(\calP)$. This extension is a natural linear order for $\calO(\calP)$
and will be denoted by~$\calO(\calP)_\prec$. On the other hand, when restricted to the set
$J(\calL)$ of join-irreducibles of $\calL$, the natural linear order $\sqsubset$ of $\calL$ extends the
ordering relation of the poset $\calJ(\calL)$. The resulting linearly ordered poset will be denoted by $\calJ(\calL)_\sqsubset$.

Our aim now is to extend the Birkhoff duality between $\DD$ and $\PP$ to a duality between the
category of naturally ordered finite distributive lattices with special surjective lattice homomorphisms
referred to as \emph{positive} (to be defined soon), and the category of
finite linearly ordered posets and embeddings. As natural linear orders are closely related to the
lattice whose order they extend it should come as no surprise that the duality we are to
develop heavily relies on the Birhkoff duality. Since positive surjective lattice homomorphisms
\emph{are} lattice homomorphisms, we can dualise them \'a la Birkhoff, and then it turns out that
the Birkhoff dual is actually an embedding between the corresponding linearly ordered posets.
Dually, since embeddings of finite linearly ordered posets \emph{are} embeddings of finite posets,
we can dualise them \'a la Birkhoff, and it again turns out that
the Birkhoff dual is a positive surjective lattice homomorphism.

Let $f : \calL \to \calK$ be a lattice homomorphism. (In this setting every lattice homomorphism is a
$\{0,1\}$-homomorphism because we include the top and the bottom element of the lattice in the signature.)
Let
$$
  N(f) = \{x \in L : (\exists y \in L)(y < x \land f(y) = f(x)) \}.
$$
For an element $x \in L$ and a set of elements $S \subseteq L$, let
$$
  x - S = \bigvee \{j \in J(\calL) : j \le x \land j \notin S \}.
$$

\begin{LEM}\label{cerp.lem.1}
  Let $f : \calL \to \calK$ be a lattice homomorphism and let $\phi = \calJ(f) : \calJ(\calK) \to \calJ(\calL)$,\
  where $\calL$ and $\calK$ are finite distributive lattices. Take any $j \in J(\calL)$.

  $(a)$ $j \in \im(\phi)$ if and only if $j \notin N(f)$.

  $(b)$ Assume that $f$ is surjective (then $\phi$ is an embedding) and let $x \in L$. If $j \le x - N(f)$
  then $j \in \im(\phi)$ and $\phi^{-1}(j) = f(j)$.
\end{LEM}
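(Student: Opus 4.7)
My plan is to leverage Birkhoff duality throughout. Under the natural isomorphisms $\eta_\calL$ and $\eta_\calK$, the lattice homomorphism $f$ corresponds to the pullback $\calO(\phi) : U \mapsto \phi^{-1}(U)$ on down-sets, so that $\eta_\calK(f(y)) = \phi^{-1}(\downarrow_{J(\calL)} y)$ for every $y \in L$. This single identification drives both parts of the argument.

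For (a), the key observation is that a join-irreducible $j$ has a unique lower cover $j^*$, and every $y < j$ satisfies $y \le j^*$; hence $j \in N(f)$ iff $f(j^*) = f(j)$. By the identification above, $f(j) = f(j^*)$ iff $\phi^{-1}(\downarrow_{J(\calL)} j) = \phi^{-1}(\downarrow_{J(\calL)} j^*)$. Since $\downarrow_{J(\calL)} j$ is the disjoint union of $\downarrow_{J(\calL)} j^*$ with $\{j\}$, this further reduces to $\phi^{-1}(\{j\}) = \varnothing$, i.e.\ $j \notin \im(\phi)$. Combining the equivalences yields $j \in \im(\phi) \Leftrightarrow j \notin N(f)$.

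For (b), surjectivity of $f$ makes $\phi$ an order-embedding and gives the explicit formula $\phi(y) = \bigwedge f^{-1}(y)$. Reading $j \le x - N(f)$ as saying that $j$ is a join-irreducible appearing in the defining expression $\bigvee\{j' \in J(\calL) : j' \le x \wedge j' \notin N(f)\}$ (so in particular $j \notin N(f)$), part (a) immediately gives $j \in \im(\phi)$. Now set $y = \phi^{-1}(j) \in J(\calK)$; from $\bigwedge f^{-1}(y) = \phi(y) = j$, together with the fact that $f$ preserves finite meets (being a $\{0,1\}$-lattice homomorphism), one obtains $f(j) = \bigwedge f(f^{-1}(y)) = \bigwedge\{y\} = y$, which is exactly $\phi^{-1}(j) = f(j)$.

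The one delicate point, and what I would expect to be the main obstacle, is pinning down the meaning of $j \le x - N(f)$. Purely lattice-theoretically, a join-irreducible $j$ with $j \le \bigvee_i j'_i$ only forces $j \le j'_i$ for some $i$, not $j = j'_i$, so the literal reading can produce a $j \in N(f)$ sitting strictly below some summand $j' \notin N(f)$, for which the stated conclusion genuinely fails. I would therefore interpret the hypothesis in the intended stronger sense, namely that $j$ is itself one of the defining summands, so that the step $j \notin N(f)$ feeding into (a) is automatic.
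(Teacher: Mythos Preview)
Your argument for (a) is essentially the paper's proof, organised a bit more cleanly: where the paper writes $\downarrow_{J(\calL)} j = \{j, k_1, \ldots, k_n\}$ and compares $\phi^{-1}(\downarrow_{J(\calL)} j)$ with $\phi^{-1}(\downarrow_{J(\calL)}\{k_1,\ldots,k_n\})$, you recognise $k_1 \lor \cdots \lor k_n$ as the unique lower cover $j^*$ and reduce everything to whether $\phi^{-1}(\{j\})$ is empty. Same idea, same machinery (the naturality square $\eta_\calK \circ f = \phi^{-1}\circ \eta_\calL$).

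For the identity $\phi^{-1}(j) = f(j)$ in (b) your route differs slightly: the paper argues via $\phi^{-1}(\downarrow_{J(\calL)} j) = \downarrow_{J(\calK)} \phi^{-1}(j)$ (using that $\phi$ is an embedding) together with the naturality square, while you compute directly from $\phi(y) = \bigwedge f^{-1}(y)$ and the fact that $f$ preserves finite meets. Both are short and correct.

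Your reservation about the hypothesis is exactly on point. The paper's own proof simply asserts ``From $j \le x - N(f)$ we know that $j \notin N(f)$'' without comment, and as you suspect this fails in general: take $\calL$ the three-element chain $0 < a < 1$, $\calK$ the two-element chain, and $f$ sending $a \mapsto 0$; then $N(f) \cap J(\calL) = \{a\}$, $1 - N(f) = 1$, and $a \le 1 - N(f)$ while $a \in N(f)$. So the literal statement of (b) is false. Your reading---that $j$ is meant to be one of the summands defining $x - N(f)$, hence $j \le x$ and $j \notin N(f)$---is the one the paper needs, and indeed in the only application (Lemma~\ref{cerp.lem.tech}) the relevant join-irreducibles are explicitly taken from $J(\calL) \setminus N(f)$. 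With that interpretation your proof of (b) is complete.
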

\begin{proof}
  $(a)$
  Assume that $j \notin \im(\phi)$ and let $\downarrow_{J(\calL)} = \{j, k_1, \ldots, k_n\}$ where
  $j, k_1, \ldots, k_n$ and pairwise distinct join-irreducibles of $\calL$. Then, clearly,
  $\downarrow_{J(\calL)} j \supsetneqq \downarrow_{J(\calL)} \{k_1, \ldots, k_n\}$ while
  $\phi^{-1}(\downarrow_{J(\calL)} j) = \phi^{-1}(\downarrow_{J(\calL)} \{k_1, \ldots, k_n\})$
  (because $j \notin \im(\phi)$). Therefore, $j > k_1 \lor \ldots \lor k_n$ and $f(j) = f(k_1 \lor \ldots \lor k_n)$
  in $\calL$, whence $j \in N(f)$.

  Assume now that $j \in N(f)$. Because $\eta_\calL : \calL \to \calO(\calJ(\calL)) : x \mapsto \downarrow_{J(\calL)} x$
  is an isomorphism and because $\calO(\calJ(f)) = \phi^{-1}$ we have that $\downarrow_{J(\calL)} j \in N(\phi^{-1})$.
  Therefore, there is a $U \in O(\calL)$ such that $\downarrow_{J(\calL)} j \supsetneqq U$ and
  $\phi^{-1}(\downarrow_{J(\calL)} j) = \phi^{-1}(U)$. Consequently, $\phi^{-1}(j) = \0$ so
  $j \notin \im(\phi)$.

  $(b)$
  From $j \le x - N(f)$ we know that $j \notin N(f)$, so $j \in \im(\phi)$ by~$(a)$.
  Since $\phi$ is an embedding, it easily follows that $\phi^{-1}(\downarrow_{J(\calL)} j) =
  \downarrow_{J(\calK)} \phi^{-1}(j)$. On the other hand, the fact that $\eta$ is a natural
  transformation yields $\eta_\calK \circ f = \phi^{-1} \circ \eta_\calL$ whence
  $\downarrow_{J(\calK)} f(j) = \phi^{-1}(\downarrow_{J(\calL)} j)$. Finally,
  $\downarrow_{J(\calK)} f(j) = \downarrow_{J(\calK)} \phi^{-1}(j)$ whence
  $f(j) = \phi^{-1}(j)$ because $\eta_\calK$ is bijective.
\end{proof}

\begin{LEM}\label{cerp.lem.MinusN}
  Let $f : \calL \to \calK$ be a lattice homomorphism
  where $\calL$ and $\calK$ are finite distributive lattices, and let $x \in L$.
  Then $f(x) = f(x - N(f))$.
\end{LEM}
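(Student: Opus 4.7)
The plan is to exploit Birkhoff duality. Set $\phi = \calJ(f) : \calJ(\calK) \to \calJ(\calL)$. Naturality of the isomorphism $\eta_\calM : \calM \to \calO(\calJ(\calM))$, $y \mapsto \downarrow_{J(\calM)} y$, together with $\calO(\calJ(f)) = \phi^{-1}$, yields $\eta_\calK \circ f = \phi^{-1} \circ \eta_\calL$. Since $\eta_\calK$ is bijective, the identity $f(x) = f(x - N(f))$ is equivalent to
$$
  \phi^{-1}(\downarrow_{J(\calL)} x) \;=\; \phi^{-1}(\downarrow_{J(\calL)} (x - N(f)))
$$
as subsets of $J(\calK)$. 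I plan to verify this reformulation.

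The inclusion $\supseteq$ is immediate since $x - N(f) \le x$ forces $\downarrow_{J(\calL)}(x - N(f)) \subseteq \Boxed{\downarrow_{J(\calL)}} x$. For $\subseteq$, take $k \in J(\calK)$ with $\phi(k) \le x$. By part~(a) of Lemma~\ref{cerp.lem.1}, $\phi(k) \in \im(\phi) = J(\calL) \setminus N(f)$, so $\phi(k)$ is itself a join-irreducible of $\calL$ lying below $x$ and outside $N(f)$; hence $\phi(k) \le \bigvee \{j \in J(\calL) : j \le x,\ j \notin N(f)\} = x - N(f)$, which is what we need.

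The only subtlety, and it is mild, is that $\downarrow_{J(\calL)}(x - N(f))$ may itself contain elements of $N(f)$, so the two down-sets are not in general equal as subsets of $J(\calL)$; it is the application of $\phi^{-1}$ that washes out the difference, because $\phi^{-1}(\{j\}) = \0$ for every $j \in N(f)$ by Lemma~\ref{cerp.lem.1}(a). A more hands-on alternative would induct on the height of join-irreducibles in $\calJ(\calL)$, showing that every $j \in N(f)$ with $j \le x$ satisfies $f(j) \le f(x - N(f))$ by picking a witness $y < j$ with $f(y) = f(j)$ and decomposing $y$ into its strictly smaller join-irreducible summands. Either route works, but the dualised version is cleaner because it reduces the whole claim to a routine set-theoretic identity.
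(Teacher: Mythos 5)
Your proof is correct, but it takes a genuinely different route from the paper's. The paper argues entirely inside $\calL$: writing $x = j_1 \lor x'$ with $j_1 \in N(f)$, it picks a witness $t < j_1$ with $f(t) = f(j_1)$, observes that $t \le x'$ (by comparing the unique join-irreducible representations), and computes $f(x) = f(x' \lor j_1) = f(x') \lor f(t) = f(x' \lor t) = f(x')$, iterating until all joinands from $N(f)$ are stripped away. That argument is self-contained and uses nothing beyond the homomorphism property and unique irredundant decompositions; in particular it does not depend on Lemma~\ref{cerp.lem.1}. Your version instead pushes the statement through $\eta$ and reduces it to the set identity $\phi^{-1}(\downarrow_{J(\calL)} x) = \phi^{-1}(\downarrow_{J(\calL)}(x - N(f)))$, with Lemma~\ref{cerp.lem.1}$(a)$ doing the real work in the inclusion $\subseteq$ (namely $\im(\phi) = \{j \in J(\calL) : j \notin N(f)\}$, so every $\phi(k)$ lying below $x$ is one of the joinands of $x - N(f)$). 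All steps check out: the identity $\eta_\calK \circ f = \phi^{-1} \circ \eta_\calL$ is exactly the naturality instance the paper itself invokes in Lemma~\ref{cerp.lem.1}$(b)$, Lemma~\ref{cerp.lem.1}$(a)$ is stated for arbitrary (not necessarily surjective) homomorphisms, and there is no circularity since Lemma~\ref{cerp.lem.1} precedes this one. Your closing remark about the two down-sets differing before $\phi^{-1}$ is applied is the right thing to flag, and you resolve it correctly. The trade-off: your argument is shorter and conceptually transparent once the duality dictionary is in place, whereas the paper's is longer but elementary and keeps this lemma logically independent of Lemma~\ref{cerp.lem.1}.
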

\begin{proof}
  Let $J(\calL) = \{j_1, j_2, \ldots, j_n\}$, $n\in\NN$, and let $x\in L$ be arbitrary.
  Let $x = \epsilon_1 \cdot j_1 \lor \epsilon_2 \cdot j_2 \lor \ldots \lor \epsilon_n \cdot j_n$
  be the unique representation of~$x$ (where $\epsilon_i \in \{0, 1\}$).
  Assume that $j_i \in N(f)$ for some $i$ satisfying $\epsilon_i = 1$, say,
  $x = j_1 \lor \epsilon_2 \cdot j_2 \lor \ldots \lor \epsilon_n \cdot j_n$ and $j_1 \in N(f)$.
  Let $x' = \epsilon_2 \cdot j_2 \lor \ldots \lor \epsilon_n \cdot j_n$ so that
  $x = x' \lor j_1$. By the assumption, $j_1 \in N(f)$ so there is a
  $t \in L$ such that $f(t)=f(j_1)$ and $t < j_1$.
  Let $t = \delta_1 \cdot j_1 \lor \delta_2 \cdot j_2 \lor \ldots \lor \delta_n \cdot j_n$
  be the unique representation of~$t$ (where $\delta_i \in \{0, 1\}$). Since $t < j_1$ we have that $\delta_1 = 0$, so
  $t = \delta_2 \cdot j_2 \lor \ldots \lor \delta_n \cdot j_n$. On the other hand, from $t < j_1 \le x$ it follows that
  $\delta_i = 1 \Rightarrow \epsilon_i = 1$ for all $i$. Hence, $t \le x'$.
  So, $f(x)=f(x'\vee j_1)=f(x')\vee f(j_1)=f(x')\vee f(t)=f(x'\vee t)=f(x')$. By iterating the same argument
  we end up with an $x^{(n)} \in L$ such that $f(x) = f(x') = \ldots = f(x^{(n)})$ and
  $x^{(n)} = x - N(f)$.
\end{proof}

Let $\calL_\sqsubset$ and $\calK_\prec$ be finite distributive lattices,
each with a natural linear order (indicated in the subscript).
We say that a lattice homomorphism $f : \calL \to \calK$ is \emph{positive} if
$$
  x - N(f) \sqsubseteq y - N(f) \Rightarrow f(x) \preceq f(y)
$$
for all $x, y \in L$.

\begin{LEM}\label{cerp.lem.tech}
  Let $\calL_\sqsubset$ and $\calK_\prec$ be naturally ordered finite distributive lattices.
  If $f : \calL_\sqsubset \twoheadrightarrow \calK_\prec$ is a positive surjective lattice homomorphism then
  $\calJ(f)$ is an embedding of the corresponding linearly ordered posets, that is,
  $\calJ(f) : \calJ(\calK)_\prec \hookrightarrow \calJ(\calL)_\sqsubset$.

  Dually, if $\phi : \calP_\sqsubset \hookrightarrow \calQ_\prec$ is an embedding of linearly ordered posets
  then $\calO(\phi)$ is a positive surjective lattice homomorphism
  $\calO(\calQ)_\prec \twoheadrightarrow \calO(\calP)_\sqsubset$.
\end{LEM}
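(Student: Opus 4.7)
The lemma splits into two complementary implications, and I will treat each by combining Lemma~\ref{cerp.lem.1} and Lemma~\ref{cerp.lem.MinusN} with a direct inspection of the antilexicographic ordering that defines a natural linear order.

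For the forward direction, let $f : \calL_\sqsubset \twoheadrightarrow \calK_\prec$ be positive and surjective and set $\phi = \calJ(f)$. Birkhoff duality already gives that $\phi$ is a poset embedding, so only compatibility with the linear orders remains. The key observation is that by Lemma~\ref{cerp.lem.1}(b), the restriction $\restr{f}{J(\calL) \setminus N(f)}$ is precisely the inverse of $\phi$ on its image; hence showing that $\phi$ carries $\prec$ to $\sqsubset$ is equivalent to showing that this restriction of $f$ carries $\sqsubset$ to $\prec$. For $j \in J(\calL) \setminus N(f)$ one has trivially $j - N(f) = j$, so given $j_1 \sqsubset j_2$ both outside $N(f)$, positivity of $f$ yields $f(j_1) \preceq f(j_2)$, and the injectivity of $f$ on $J(\calL) \setminus N(f)$ upgrades this to a strict inequality.

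For the backward direction, let $\phi : \calP_\sqsubset \hookrightarrow \calQ_\prec$ be an embedding of linearly ordered posets and set $f = \calO(\phi)$, so that $f(U) = \phi^{-1}(U)$. Surjectivity as a lattice homomorphism is standard Birkhoff duality. To establish positivity, the first step is to identify $N(f)$ at the level of the join-irreducibles $\downarrow_\calQ q$ of $\calO(\calQ)$: naturality of $\epsilon$ gives $\im(\calJ(f)) = \{\downarrow_\calQ q : q \in \im(\phi)\}$, so Lemma~\ref{cerp.lem.1}(a) yields $\downarrow_\calQ q \in N(f)$ iff $q \notin \im(\phi)$. Consequently $U - N(f) = \downarrow_\calQ(U \cap \im(\phi))$, and the poset-maximal elements of this down-set all lie in $\im(\phi)$.

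Now assume $U - N(f) \preceq V - N(f)$. By Lemma~\ref{cerp.lem.MinusN} we may replace $U$, $V$ by $U - N(f)$, $V - N(f)$ without altering $f(U)$, $f(V)$, so we may assume outright that $U \preceq V$ are down-sets of $\calQ$ with all poset-maxima in $\im(\phi)$. If $U = V$ there is nothing to prove; otherwise let $q^* = \max_\prec(U \triangle V)$, which by the definition of the antilex order lies in $V \setminus U$. A short argument, using that $\prec$ extends the poset order on $\calQ$ and that $U$ is down-closed, shows that $q^*$ is actually poset-maximal in $V$, hence $q^* \in \im(\phi)$. Letting $p^* = \phi^{-1}(q^*)$ and exploiting that $\phi$, being an embedding of linearly ordered sets, both preserves and reflects the linear order, one verifies $p^* = \max_\sqsubset(\phi^{-1}(U) \triangle \phi^{-1}(V))$ with $p^* \in \phi^{-1}(V) \setminus \phi^{-1}(U)$, which translates back to $f(U) \sqsubset f(V)$.

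The main subtlety will be this maximality claim for $q^*$: that being $\prec$-maximal in the symmetric difference forces $q^*$ to be \emph{poset}-maximal in $V$. Once this is in place, membership $q^* \in \im(\phi)$ is immediate from the normalization $V = V - N(f)$, and the remaining work is essentially bookkeeping about the antilex order.
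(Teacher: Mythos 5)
Your proof is correct and follows essentially the same route as the paper: the forward half is the paper's argument read through the inverse bijection $f\restriction_{\im(\calJ(f))}$, and the backward half rests on the same ingredients (Lemma~\ref{cerp.lem.MinusN} to pass to $U-N(f)$, Lemma~\ref{cerp.lem.1} to identify $N(f)$ with the complement of $\im(\phi)$, and order-preservation of $\phi$ to transport the antilexicographic comparison). The only difference is cosmetic: you locate the $\prec$-maximum of the symmetric difference and check it is poset-maximal, where the paper writes out the join-irreducible decompositions with a common tail --- your packaging is, if anything, slightly more careful about why the relevant witness lies in $\im(\phi)$.
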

\begin{proof}
  Assume that $f$ is a positive surjective lattice homomorphism and let
  $\phi = \calJ(f)$. Clearly, $\phi$ is an embedding $\calJ(\calK) \hookrightarrow \calJ(\calL)$,
  so we have to show that $\phi$ preserves and reflects the linear order, that is:
  $$
    j \preceq j' \text{ if and only if } \phi(j) \sqsubseteq \phi(j')
  $$
  for all $j, j' \in J(\calK)$. Since both $\sqsubseteq$ and $\preceq$ are linear orders, it suffices
  to show the implication from right to left.

  Assume that $\phi(j) \sqsubseteq \phi(j')$. Since $\phi(j), \phi(j') \in \im(\phi)$ and
  $\phi(j), \phi(j') \in J(\calL)$ it follows that $\phi(j) - N(f) = \phi(j)$ and
  $\phi(j') - N(f) = \phi(j')$. Therefore, $\phi(j) \sqsubseteq \phi(j')$ implies
  $\phi(j) - N(f) \sqsubseteq \phi(j') - N(f)$, so $f(\phi(j)) \preceq f(\phi(j'))$
  because $f$ is a positive homomorphism. Therefore, $j \preceq j'$ because
  $j = f(\phi(j))$ and $j' = f(\phi(j'))$ by Lemma~\ref{cerp.lem.1}~$(b)$.

  For the other part of the statement, assume that $\phi : \calP_\sqsubset \hookrightarrow \calQ_\prec$ is an embedding
  and let $f = \calO(\phi)$. Clearly, $f$ is a surjective lattice homomorphism
  $\calO(\calQ) \twoheadrightarrow \calO(\calP)$, so we have to show that $f$ is positive. Take $x, y \in \calO(\calQ)$
  and assume that $x - N(f) \prec y - N(f)$. Let $j_1, \ldots, j_n, j'_1, \ldots, j'_s, j''_1, \ldots, j''_t
  \in J(\calO(\calQ)) \setminus N(f)$ be join-irreducibles such that
  $x - N(f) = j'_1 \lor \ldots \lor j'_s \lor j_1 \lor \ldots \lor j_n$,
  $y - N(f) = j''_1 \lor \ldots \lor j''_t \lor j_1 \lor \ldots \lor j_n$,
  $j'_1 \prec \ldots \prec j'_s \prec j_1 \prec \ldots \prec j_n$,
  $j''_1 \prec \ldots \prec j''_t \prec j_1 \prec \ldots \prec j_n$ and $j'_s \prec j''_t$.
  Then
  \begin{align*}
    f(x) &= f(x - N(f)) \qquad \text{[Lemma~\ref{cerp.lem.MinusN}]}\\
         &= f(j'_1 \lor \ldots \lor j'_s \lor j_1 \lor \ldots \lor j_n)\\
         &= f(j'_1) \lor \ldots \lor f(j'_s) \lor f(j_1) \lor \ldots \lor f(j_n)\\
         &= \phi^{-1}(j'_1) \lor \ldots \lor \phi^{-1}(j'_s) \lor \phi^{-1}(j_1) \lor \ldots \lor \phi^{-1}(j_n) \qquad \text{[Lemma~\ref{cerp.lem.1}]}.
  \end{align*}
  Analogously,
  $$
    f(y) = \phi^{-1}(j''_1) \lor \ldots \lor \phi^{-1}(j''_t) \lor \phi^{-1}(j_1) \lor \ldots \lor \phi^{-1}(j_n).
  $$
  Since $\phi$ is an embedding, it follows that
  $\phi^{-1}(j'_1) \sqsubset \ldots \sqsubset \phi^{-1}(j'_s) \sqsubset \phi^{-1}(j_1) \sqsubset \ldots \sqsubset \phi^{-1}(j_n)$,
  $\phi^{-1}(j''_1) \sqsubset \ldots \sqsubset \phi^{-1}(j''_t) \sqsubset \phi^{-1}(j_1) \sqsubset \ldots \sqsubset \phi^{-1}(j_n)$
  and $\phi^{-1}(j'_s) \sqsubset \phi^{-1}(j''_t)$. Therefore, $f(x) \sqsubset f(y)$.
\end{proof}

\begin{PROP}
  $(a)$ Every isomorphism between naturally ordered finite distributive lattices
  is a positive homomorphism. In particular, the identity mapping is a positive isomorphism.

  $(b)$ The composition of positive surjective homomorphisms between naturally ordered finite distributive lattices
  is a positive surjective homomorphism.
\end{PROP}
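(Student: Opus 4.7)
For part (a), the plan is immediate: an isomorphism $f : \calL_\sqsubset \to \calK_\prec$ between naturally ordered finite distributive lattices is, in particular, an injective lattice homomorphism that preserves the natural linear orders. Injectivity forces $N(f) = \0$, so $x - N(f) = x$ and $y - N(f) = y$ for all $x, y \in L$, and the positivity condition collapses to $x \sqsubseteq y \Rightarrow f(x) \preceq f(y)$, which is precisely the hypothesis of order-preservation. The identity mapping is then the trivial special case.

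For part (b), let $f : \calL_\sqsubset \twoheadrightarrow \calK_\prec$ and $g : \calK_\prec \twoheadrightarrow \calM_{\prec'}$ be positive surjective homomorphisms and set $h = g \circ f$. Surjectivity of $h$ and the $\{0,1\}$-homomorphism property are immediate from composition. To check positivity, assume $x - N(h) \sqsubseteq y - N(h)$. By Lemma~\ref{cerp.lem.MinusN} applied to $h$, $h(x) = h(x - N(h))$ and $h(y) = h(y - N(h))$, so we may replace $x$, $y$ by $x - N(h)$, $y - N(h)$ and assume that every join-irreducible $j \le x$ or $j \le y$ lies outside $N(h)$. The remaining task splits into two passes: apply positivity of $f$ to obtain $f(x) \preceq f(y)$, then apply positivity of $g$ to conclude $h(x) \preceq' h(y)$.

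The first pass uses the inclusion $N(f) \cap J(\calL) \subseteq N(h) \cap J(\calL)$ (any $w < j$ with $f(w) = f(j)$ automatically satisfies $h(w) = h(j)$; equivalently, $\im(\calJ(h)) = \im(\calJ(f) \circ \calJ(g)) \subseteq \im(\calJ(f))$ and one invokes Lemma~\ref{cerp.lem.1}(a)). Hence $x - N(f) = x$ and $y - N(f) = y$, and positivity of $f$ gives $f(x) \preceq f(y)$. For the second pass, we need $f(x) - N(g) = f(x)$ and $f(y) - N(g) = f(y)$. Writing $x = j_1 \lor \cdots \lor j_r$ with each $j_i \in J(\calL) \setminus N(h) \subseteq J(\calL) \setminus N(f)$, Lemma~\ref{cerp.lem.1}(b) yields $f(j_i) \in J(\calK)$, so the join-irreducibles of $\calK$ below $f(x)$ are exactly the $f(j_i)$. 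Since $j_i \in \im(\calJ(h)) = \im(\calJ(f) \circ \calJ(g))$, we can write $j_i = \calJ(f)(\calJ(g)(k_i))$ for some $k_i \in J(\calM)$; Lemma~\ref{cerp.lem.1}(b) then identifies $\calJ(g)(k_i) = f(j_i)$, so $f(j_i) \in \im(\calJ(g))$, whence $f(j_i) \notin N(g)$ by Lemma~\ref{cerp.lem.1}(a). The same argument applies to $y$, and positivity of $g$ completes the proof.

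The main obstacle is the second pass: translating the hypothesis \emph{$j \notin N(h)$ for every join-irreducible $j \le x$} into the corresponding fact \emph{$f(j) \notin N(g)$ for every join-irreducible $f(j) \le f(x)$ in $\calK$}. Everything hinges on the functorial identity $\calJ(g \circ f) = \calJ(f) \circ \calJ(g)$ together with the dictionary supplied by Lemma~\ref{cerp.lem.1} that links the combinatorial set $N(\cdot)$ to the image of the Birkhoff dual; once this bookkeeping is in place the two positivity applications line up mechanically.
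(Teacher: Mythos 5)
Your proof is correct, but for part (b) it takes a genuinely different route from the paper. The paper first settles the special case where one of the two factors is an isomorphism, and then handles the general case by passing through the Birkhoff duality: $\calJ(g\circ f)=\calJ(f)\circ\calJ(g)$ is a composite of embeddings of linearly ordered posets, hence an embedding, so $\calO(\calJ(g\circ f))$ is positive by Lemma~\ref{cerp.lem.tech}, and $g\circ f$ differs from it only by the natural isomorphisms $\eta_\calL$ and $\eta_\calM^{-1}$, which the special case absorbs. You instead verify the definition of positivity for $g\circ f$ head-on: reduce to $x=x-N(g\circ f)$ via Lemma~\ref{cerp.lem.MinusN} and the idempotency of $-N(\cdot)$, use $N(f)\subseteq N(g\circ f)$ to apply positivity of $f$, and then show that the surviving join-irreducibles $j_i$ satisfy $f(j_i)\in J(\calK)\setminus N(g)$ (via $\im(\calJ(g\circ f))\subseteq\im(\calJ(f))$, Lemma~\ref{cerp.lem.1}(b), and Lemma~\ref{cerp.lem.1}(a) for $g$) so that $f(x)-N(g)=f(x)$ and positivity of $g$ applies. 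Your version is more self-contained and computational -- it does not need Lemma~\ref{cerp.lem.tech} or the naturality of $\eta$ -- while the paper's version is shorter given that the duality machinery is already set up for Theorem~\ref{cerp.thm.dualeqiv}. Two small overstatements, neither of which affects the argument: after replacing $x$ by $x-N(g\circ f)$ it is not true that \emph{every} join-irreducible below $x$ avoids $N(g\circ f)$ (only that $x$ is a join of such join-irreducibles, which is all you use); likewise the join-irreducibles of $\calK$ below $f(x)$ need not be \emph{exactly} the $f(j_i)$ -- there may be smaller ones -- but you only need $f(x)=\bigvee_i f(j_i)$ with each $f(j_i)\in J(\calK)\setminus N(g)$, which forces $f(x)-N(g)\geq f(x)$ and hence equality.
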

\begin{proof}
  $(a)$ Easy, since $N(f) = \0$ for an isomorphism $f$.

  $(b)$ Let $\calL_\sqsubset$, $\calK_\prec$ and $\calM_<$ be naturally ordered finite distributive lattices
  and let $f : \calL_\sqsubset \to \calK_\prec$ and $g : \calK_\prec \to \calM_<$ be positive surjective
  homomorphisms.

  Let us first show the claim in case one of $f$, $g$ is an isomorphism. If $g$ is an isomorphism then
  $N(g \circ f) = N(f)$ and the claim follows directly. If $f$ is an isomorphism then it is easy
  to see that $N(g \circ f) = f^{-1}(N(g))$ and that $f(x - S) = f(x) - f(S)$ for all $x \in L$, $S \subseteq L$
  whence the claim follows by straightforward calculation.

  Let us now move on to the general case. Since $f$ and $g$ are positive surjective homomorphisms,
  $\calJ(f)$ and $\calJ(g)$ are embeddigs of the corresponding linearly ordered posets (Lemma~\ref{cerp.lem.tech}).
  Then $\calJ(g) \circ \calJ(f)$ is an embedding. Note that $\calJ(g) \circ \calJ(f) = \calJ(g \circ f)$ because
  $\calJ$ is a functor. Therefore, $\calJ(g \circ f)$ is an embedding of the corresponding linearly ordered posets,
  so $\calO(\calJ(g \circ f))$ is a positive surjective homomorphism (Lemma~\ref{cerp.lem.tech}).

  Recall that $\eta_\calL : \calL \to \calO(\calJ(\calL)) : x \mapsto \downarrow_{J(\calL)} x$ is an isomorphism
  for every finite distributive lattice $\calL$. It is easy to see that the same
  $\eta_\calL : x \mapsto \downarrow_{J(\calL)} x$ is an isomorphism
  $\calL_\sqsubset \to \calO(\calJ(\calL))_\sqsubset$, where $\calO(\calJ(\calL))_\sqsubset$
  denotes the finite distributive lattice $\calO(\calJ(\calL))$ where
  the natural linear order $\sqsubset$ of $\calL$ first
  restricts to $\calJ(\calL)$ and then extends uniquely to a natural linear order of~$\calO(\calJ(\calL))$.
  Since $\eta : \ID \to \calO \calJ$ is a natural transformation,
  $$
    g \circ f = \eta_\calM^{-1} \circ \calO(\calJ(g \circ f)) \circ \eta_\calL.
  $$
  As we have just seen, $\eta_\calM^{-1}$ and $\eta_\calL$ are isomorphisms of the corresponding naturally
  ordered finite distributive lattices and hence positive by~$(a)$,
  and $\calO(\calJ(g \circ f))$ is a positive surjective homomorphism. By the special case considered at
  the beginning of the proof, the composition $\eta_\calM^{-1} \circ \calO(\calJ(g \circ f)) \circ \eta_\calL$
  is also a positive surjective homomorphism, and this equals~$g \circ f$.
\end{proof}

Since the composition of positive surjective homomorphisms between
naturally ordered finite distributive lattices
is a positive surjective homomorphism, naturally ordered finite distributive lattices
together with positive surjective homomorphisms form a category with we denote by~$\DD'$.
Let~$\PP'$ be the category whose objects are finite linearly ordered posets
and whose morphisms are poset embeddings that preserve the additional linear order.

Let us define contravariant functors $\calJ' : \DD' \to \PP'$ and $\calO' : \PP' \to \DD'$ on objects
as follows: $\calJ'(\calL_\prec) = \calJ(\calL)_{\prec}$,
$\calO'(\calQ_{\Boxed{\sqsubset}}) = \calO(\calQ)_\sqsubset$;
and on morphisms as follows: for a positive surjective lattice homomorphism
$f : \calL_\sqsubset \twoheadrightarrow \calK_\prec$ we let $\calJ'(f) = \calJ(f)$,
and for an embedding $\phi : \calP_\sqsubset \hookrightarrow \calQ_\prec$ we let $\calO'(\phi) = \calO(\phi)$.

\begin{THM}\label{cerp.thm.dualeqiv}
  $\calJ'$ and $\calO'$ constitute a dual equivalence between $\DD'$ and $\PP'$.
\end{THM}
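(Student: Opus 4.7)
My plan is to piggyback on the classical Birkhoff duality $(\calJ, \calO, \eta, \epsilon)$ between $\DD$ and $\PP$ and simply verify that the extra linear-order data is transported correctly at each stage. Well-definedness of $\calJ'$ on objects is immediate: if $\sqsubset$ is a natural linear order on $\calL$ arising from some linear extension $\sqsubset^0$ of the order on $J(\calL)$, then by construction $\calJ(\calL)_\sqsubset$ is a linearly ordered poset. Dually, by the definition of $\calO(\calQ)_\sqsubset$, the functor $\calO'$ sends a linearly ordered poset to a naturally ordered distributive lattice. Well-definedness on morphisms is exactly the content of Lemma~\ref{cerp.lem.tech}: positive surjective homomorphisms and embeddings of linearly ordered posets are Birkhoff-dual to each other. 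Once this is in place, preservation of identities and of composition for $\calJ'$ and $\calO'$ is automatic from the corresponding properties of $\calJ$ and $\calO$, because the underlying set-theoretic maps are unchanged.

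For the equivalence itself, I would define $\eta' : \ID_{\DD'} \to \calO' \calJ'$ and $\epsilon' : \ID_{\PP'} \to \calJ' \calO'$ by reusing the Birkhoff natural isomorphisms, that is, $\eta'_{\calL_\sqsubset} := \eta_\calL : x \mapsto \downarrow_{J(\calL)} x$ and $\epsilon'_{\calQ_\sqsubset} := \epsilon_\calQ : x \mapsto \downarrow_\calQ x$. Both are already lattice/poset isomorphisms by Birkhoff, so the only thing to check is that they are isomorphisms in $\DD'$ and $\PP'$ respectively. For $\epsilon'_{\calQ_\sqsubset}$ this is essentially built in: the natural order on $\calO(\calQ)_\sqsubset$ is defined as the antilexicographic extension of the linear order on $\calJ(\calO(\calQ))$ that is pulled back from $\sqsubset$ on $\calQ$ via $\epsilon_\calQ$, so $\epsilon_\calQ$ preserves and reflects the linear order by definition and is an isomorphism in $\PP'$. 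For $\eta'_{\calL_\sqsubset}$ the same observation shows that it is a lattice isomorphism that respects the linear orders on both sides, hence a positive isomorphism by part $(a)$ of the previous proposition (this fact was in fact already invoked and proved in passing there).

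It remains to verify naturality, i.e., that for every morphism $f : \calL_\sqsubset \twoheadrightarrow \calK_\prec$ in $\DD'$ the square
\begin{equation*}
\calO'(\calJ'(f)) \circ \eta'_{\calL_\sqsubset} = \eta'_{\calK_\prec} \circ f,
\end{equation*}
commutes in $\DD'$, and analogously for $\epsilon'$ and morphisms of $\PP'$. But both squares are identical as diagrams of underlying maps to the Birkhoff naturality squares for $\eta$ and $\epsilon$, which already commute. Since positivity/order-preservation was used only to check that the ingredients live in the right categories, no further work is needed.

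The one genuinely non-trivial step above is the compatibility of positivity with Birkhoff dualisation, and that is exactly what Lemma~\ref{cerp.lem.tech} (together with Lemmas~\ref{cerp.lem.1} and~\ref{cerp.lem.MinusN}) was designed to supply; beyond that, the argument is bookkeeping, since natural orders have been defined so that the functors $\calJ$ and $\calO$ and the natural transformations $\eta$ and $\epsilon$ respect them on the nose.
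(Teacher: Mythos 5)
Your proposal is correct and follows essentially the same route as the paper: reuse the Birkhoff natural isomorphisms $\eta$ and $\epsilon$, observe that they respect the natural/linear orders on both sides (so they are isomorphisms in $\DD'$ and $\PP'$), and note that naturality is inherited verbatim since $\calJ'(f)=\calJ(f)$ and $\calO'(\phi)=\calO(\phi)$. The only difference is that you spell out the well-definedness of the functors on morphisms via Lemma~\ref{cerp.lem.tech} explicitly, which the paper leaves implicit at this point.
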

\begin{proof}
  Recall again that
  $\eta_\calL : \calL \to \calO(\calJ(\calL)) : x \mapsto \downarrow_{J(\calL)} x$ is an isomorphism
  for every finite distributive lattice $\calL$, that
  $\epsilon_\calQ : \calQ \to \calJ(\calO(\calQ)) : x \mapsto \downarrow_{\calQ} x$ is an isomorphism
  for every finite poset $\calQ$, and that the corresponding transformations $\eta : \ID \to \calO \calJ$
  and $\epsilon : \ID \to \calJ \calO$ are natural. Then it is easy to see that
  $\eta'_{\calL_\sqsubset} : \calL_\sqsubset \to \calO'(\calJ'(\calL_\sqsubset)) :
  x \mapsto \downarrow_{J(\calL)} x$ and
  $\epsilon'_{\calQ_\sqsubset} : \calQ_\sqsubset \to \calJ'(\calO'(\calQ_\sqsubset)) :
  x \mapsto \downarrow_{\calQ} x$ are indeed isomorphisms, and that the naturality of
  the transformations $\eta' : \ID \to \calO' \calJ'$ and $\epsilon' : \ID \to \calJ' \calO'$
  immediately follows from the fact that $\eta$ and $\epsilon$ are natural
  because $\calJ'(f) = \calJ(f)$ and $\calO'(\phi) = \calO(\phi)$.
\end{proof}

\begin{COR}\label{rcl.cor.catram}
  The category $\DD'$ has the dual Ramsey property.
\end{COR}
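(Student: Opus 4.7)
The plan is to combine the dual equivalence just established (Theorem~\ref{cerp.thm.dualeqiv}) with the transfer principle for Ramsey properties (Theorem~\ref{cerp.thm.dual}) and a known Ramsey-type result for finite linearly ordered posets.

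First, I would invoke the classical Nešetřil--Rödl theorem, already cited in the introduction, which asserts that the class of finite posets equipped with a linear extension, together with order-preserving poset embeddings, has the Ramsey property. In the notation of this paper, that is precisely the statement that the category $\PP'$ has the Ramsey property for morphisms. Since embeddings of ordered structures are monomorphisms and the automorphism group of an object of $\PP'$ is trivial (a linear order is rigid), the two notions of Ramsey property coincide in $\PP'$, and we get the Ramsey property for objects as well.

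Second, by Theorem~\ref{cerp.thm.dualeqiv} the categories $\DD'$ and $\PP'$ are dually equivalent, with the equivalence implemented by the contravariant functors $\calJ'$ and $\calO'$. Applying Theorem~\ref{cerp.thm.dual} to this dual equivalence, the Ramsey property of $\PP'$ transfers to the dual Ramsey property of $\DD'$. This is a one-line deduction once the two preceding ingredients are in place.

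The main obstacle is essentially behind us: it lay in verifying that positive surjective homomorphisms really do correspond, via Birkhoff duality, to embeddings of linearly ordered posets, which was the content of Lemma~\ref{cerp.lem.tech} and Theorem~\ref{cerp.thm.dualeqiv}. Given those, no further combinatorics is required for this corollary; one only has to cite the Nešetřil--Rödl result for ordered posets and quote Theorem~\ref{cerp.thm.dual}.
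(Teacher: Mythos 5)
Your proof is correct and follows exactly the paper's argument: the Ramsey property for finite linearly ordered posets (the paper cites Soki\'c \cite{sokic} here, though Ne\v set\v ril--R\"odl is an equally valid reference) combined with the dual equivalence of Theorem~\ref{cerp.thm.dualeqiv} and the transfer principle of Theorem~\ref{cerp.thm.dual}. Your extra remark on rigidity making the object and morphism versions coincide is also consistent with the paper's preliminary discussion.
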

\begin{proof}
  Theorem~\ref{cerp.thm.dual} implies that the category
  $\DD'$ has the dual Ramsey property because the category $\PP'$ has the Ramsey property~\cite{sokic}
  and $(\DD')^\op$ and $\PP'$ are equivalent by Theorem~\ref{cerp.thm.dualeqiv}.
\end{proof}

Let us now spell out this result in terms of special partitions of lattices.

Let $\calL_\sqsubset$ be a naturally ordered finite distributive lattice and let
$\Phi$ be a congruence of $\calL$. Let
$$
  N(\Phi) = \{x \in L : (\exists y \in L)(y < x \land (x, y) \in \Phi) \}.
$$
We say that $\Phi$ is a \emph{positive congruence of $\calL_\sqsubset$} if the
following holds for all pairs $A, B$ of distinct congruence classes of $\Phi$:
\begin{gather*}
  (\forall a \in A)(\forall b \in B) a - N(\Phi) \sqsubseteq b - N(\Phi)
  \text{\quad or}\\
  (\forall a \in A)(\forall b \in B) a - N(\Phi) \sqsupseteq b - N(\Phi).
\end{gather*}

The following is a well known fact in lattice theory but we nevertheless include the
proof to keep the paper self contained.

\begin{LEM}\label{classic}
  Let $\calL$ be a finite distributive lattice and $\Phi$ a congruence
  of $\calL$. Then the smallest element of each join irreducible class
  in $\calL/\Phi$ is join irreducible in $\calL$.
\end{LEM}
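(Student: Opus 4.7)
My plan is to use the minimum element of the given congruence class and derive a contradiction from join reducibility in $\calL$.

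First I would recall the structural fact that every congruence class of a lattice congruence is a sublattice: if $x \mathrel{\Phi} y$ and $x' \mathrel{\Phi} y'$ then $x \lor x' \mathrel{\Phi} y \lor y'$ and $x \land x' \mathrel{\Phi} y \land y'$, so any class $A$ is closed under $\lor$ and $\land$. Since $\calL$ is finite, every such sublattice has a (unique) minimum element, so $a := \min A$ is well defined.

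Next I would fix a join irreducible class $A$ in $\calL/\Phi$ and argue that $a > 0$: if we had $a = 0$, then $0 \in A$, so $A$ is the bottom element $0/\Phi$ of $\calL/\Phi$; but the bottom element of any lattice is not join irreducible (by the standard convention that join irreducibility excludes the bottom), contradicting the assumption on $A$.

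Now I would assume for contradiction that $a$ is not join irreducible in $\calL$. Since $a > 0$, this forces $a = b \lor c$ for some $b, c \in L$ with $b < a$ and $c < a$ (in a finite lattice any non-join-irreducible, non-bottom element is a join of two strictly smaller elements, e.g.\ the join of all join-irreducibles below it other than itself, split into two pieces). Applying the quotient map, $a/\Phi = b/\Phi \lor c/\Phi$ in $\calL/\Phi$, and since $b, c \leq a$ we have $b/\Phi \leq a/\Phi$ and $c/\Phi \leq a/\Phi$. Join irreducibility of $a/\Phi$ then forces $a/\Phi = b/\Phi$ or $a/\Phi = c/\Phi$; without loss of generality $b \mathrel{\Phi} a$, so $b \in A$. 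But then $b < a = \min A$, which is the desired contradiction. Hence $a$ is join irreducible in $\calL$.

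The argument is essentially routine once the two standard facts (congruence classes are sublattices with a minimum element in the finite case, and non-join-irreducible elements above $0$ split as $b \lor c$ with $b, c$ strictly below) are in place, so I do not anticipate a serious obstacle; the only subtlety worth spelling out is the elimination of the $a = 0$ case, which uses the convention that the bottom of a lattice is not counted as join irreducible.
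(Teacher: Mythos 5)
Your proof is correct and follows essentially the same route as the paper's: write $a = b \lor c$, pass to the quotient to get $[a]_\Phi = [b]_\Phi \lor [c]_\Phi$, invoke join irreducibility of the class, and use minimality of $a$ to conclude $a = b$ (the paper argues directly via $a \le b \le a$, you by contradiction with $b < a$). The extra care you take about congruence classes being sublattices with a minimum and about excluding $a = 0$ is fine but does not change the argument.
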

\begin{proof}
  Let $[a]_{\Phi}\in J(\calL/\Phi)$ such that $a$ is the smallest
  element in $[a]_{\Phi}$. If $a=b\vee c$ for $b,c\in L$, then
  $[a]_{\Phi}=[b]_{\Phi}\vee[c]_{\Phi}$. By the assumption we have
  $[a]_{\Phi}=[b]_{\Phi}$ or $[a]_{\Phi}=[c]_{\Phi}$. Suppose that
  $[a]_{\Phi}=[b]_{\Phi}$. Then $b\in[a]_{\Phi}$. Hence $a\leq b$, but
  from $a=b\vee c$ we obtain $b\leq a$. Therefore,~$a=b$.
\end{proof}

\begin{LEM}\label{factor}
  Let $\calL_\sqsubset$ be a naturally ordered finite distributive
  lattice and let $\Phi$ be a positive congruence of
  $\calL_\sqsubset$. If we define a linear order $\preceq$ on $\calL/\Phi$ by
  $A\preceq B$ if and only if
  $(\forall a \in A)(\forall b \in B) a -N(\Phi) \sqsubseteq b -
  N(\Phi)$ for every two classes $A$ and $B$ of the congruence $\Phi$, then
  $\preceq$ is a natural linear ordering that extends the lattice
  ordering on $\calL/\Phi$.
\end{LEM}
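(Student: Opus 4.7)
The plan is to reduce everything to a single technical observation: for every $a \in L$, the element $a - N(\Phi)$ is precisely the lattice-minimum of the $\Phi$-class $[a]_\Phi$. Granting this, the expression $a - N(\Phi) \sqsubseteq b - N(\Phi)$ depends only on the classes $[a]_\Phi$ and $[b]_\Phi$, so $A \preceq B$ reduces simply to $\min A \sqsubseteq \min B$; since $\sqsubseteq$ is a linear order on $L$, $\preceq$ is automatically a linear order on $\calL/\Phi$ (and the positivity hypothesis on $\Phi$ is in fact built in).

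To prove the identification, write $\pi = \pi_\Phi : \calL \twoheadrightarrow \calL/\Phi$, so that $N(\pi) = N(\Phi)$. The inequality $\min[a]_\Phi \le a - N(\Phi)$ is immediate from Lemma~\ref{cerp.lem.MinusN}, which puts $a - N(\Phi)$ into $[a]_\Phi$, together with $a - N(\Phi) \le a$. For the reverse, take any join-irreducible $j \le a$ with $j \notin N(\Phi)$ and consider $j \wedge \min[a]_\Phi$: applying $\pi$ gives $\pi(j) \wedge \pi(a) = \pi(j)$, so $j \wedge \min[a]_\Phi$ and $j$ lie in the same $\Phi$-class; but $j \wedge \min[a]_\Phi \le j$ and $j \notin N(\Phi)$ force equality, whence $j \le \min[a]_\Phi$. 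Taking the join over all such $j$ gives $a - N(\Phi) \le \min[a]_\Phi$.

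For the extension claim, I would combine this identification with Lemma~\ref{cerp.lem.1}: setting $\phi = \calJ(\pi)$, that lemma supplies the bijection $\phi : J(\calL/\Phi) \to J(\calL) \setminus N(\Phi)$ and, via an application of $\pi$ to $\phi(k) \wedge a$, the characterization $\phi(k) \le a \Leftrightarrow k \le \pi(a)$. Hence $\min A = \bigvee\{\phi(k) : k \in J(\calL/\Phi),\ k \le A\}$, so $A \le B$ in $\calL/\Phi$ forces $\min A \le \min B$ in $\calL$, and the unlabeled lemma preceding Lemma~\ref{cerp.lem.1} (that the natural order extends the lattice order) gives $\min A \sqsubseteq \min B$, i.e., $A \preceq B$.

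Finally, to check naturality, I would transport the linear extension $\sqsubset^0$ of the poset order on $J(\calL)$ (whose antilexicographic extension is $\sqsubset$) along $\phi^{-1}$ to obtain a linear order $\prec^0$ on $J(\calL/\Phi)$, which extends the poset order there since $\phi$ is a poset embedding. Its antilexicographic extension to $\calL/\Phi$ should then coincide with $\preceq$: in the canonical $J(\calL/\Phi)$-representation of $A = [a]_\Phi$, the coefficient at $\phi^{-1}(j)$ is $1$ iff $j \le a - N(\Phi)$, so comparing $A$ and $B$ antilexicographically in $\calL/\Phi$ matches exactly the antilexicographic comparison of $a - N(\Phi)$ and $b - N(\Phi)$ in $\calL$ (the coefficients at elements of $N(\Phi)$ vanish on both sides). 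The main obstacle is the identification $a - N(\Phi) = \min[a]_\Phi$; once it is in hand, the remainder is bookkeeping.
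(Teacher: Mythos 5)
Your route is genuinely different from the paper's and, at its core, sharper. The paper works throughout with the representatives $j_i=\min[j_i]_\Phi$ of the join-irreducible classes (via Lemma~\ref{classic}) and invokes positivity of $\Phi$ to pass from those representatives to arbitrary ones; you instead prove the clean identity $a-N(\Phi)=\min[a]_\Phi$ for \emph{every} $a$, which turns $A\preceq B$ literally into the comparison $\min A\sqsubseteq\min B$ and makes the rest routine. Your proof of that identity is correct: Lemma~\ref{cerp.lem.MinusN} places $a-N(\Phi)$ in $[a]_\Phi$, and the computation $\pi(j\wedge\min[a]_\Phi)=\pi(j)$ combined with $j\notin N(\Phi)$ forces $j\le\min[a]_\Phi$. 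A pleasant by-product you rightly point out is that $a-N(\Phi)$ then depends only on the class $[a]_\Phi$, so \emph{every} congruence of a naturally ordered finite distributive lattice is positive in the paper's sense; the hypothesis is not actually needed for this lemma. Your treatment of the extension claim via $\min A=\bigvee\{\phi(k): k\le A\}$ is also correct and in fact establishes the extension on all of $\calL/\Phi$ directly, where the paper checks it only on $J(\calL/\Phi)$.

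One justification in your final paragraph is wrong, although the step it supports is true. You assert that in the unique $J(\calL)$-representation of $a-N(\Phi)$ the coefficients at join-irreducibles belonging to $N(\Phi)$ vanish. They need not: in the three-element chain $0<p<1$ with the congruence whose classes are $\{0,p\}$ and $\{1\}$ one has $p\in N(\Phi)$ and yet $p\le 1=1-N(\Phi)$, so $p$ carries coefficient $1$. Consequently the antilexicographic comparison of $\min A$ and $\min B$ over all of $J(\calL)$ is not coefficientwise identical to the comparison of $A$ and $B$ over $J(\calL/\Phi)$. The repair is short: every $j\in J(\calL)$ is join-prime and $\min A$, $\min B$ are joins of elements of $J(\calL)\setminus N(\Phi)$, so if the coefficients of $\min A$ and $\min B$ differ at some $j\in N(\Phi)$ (say $j\le\min B$, $j\not\le\min A$), then $j<j'$ for some $j'\in J(\calL)\setminus N(\Phi)$ with $j'\le\min B$ and necessarily $j'\not\le\min A$; since $\sqsubset$ extends the lattice order on $J(\calL)$, the topmost differing coefficient therefore always lies in $J(\calL)\setminus N(\Phi)=\im(\phi)$, and the two antilexicographic comparisons agree. (The paper's own proof elides essentially the same point in its closing parenthetical remark.) With that sentence added, your argument is complete.
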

\begin{proof}
  Note, first, that if $[i]_{\Phi},[j]_{\Phi}\in J(\calL/\Phi)$
  are such that $i = \min [i]_{\Phi}$ and $j= \min [j]_{\Phi}$ then $i,j\in J(\calL)$ by
  Lemma~\ref{classic} and we have that $i = i - N(\Phi)$ and $j = j - N(\Phi)$.
  Now, $i\sqsubseteq j\Leftrightarrow i-N(\Phi)\sqsubseteq
  j-N(\Phi)\Leftrightarrow[i]_\Phi\preceq[j]_\Phi$ by the
  definition of $\preceq$ and positivity of $\Phi$. Also,
  $i\leq j\Leftrightarrow[i]_{\Phi}\leq[j]_{\Phi}$.

  In order to show that $\preceq$ is a natural linear order, let us first show that it
  extends $\le$ on $J(\calL/\Phi)$. Take any
  $[i]_{\Phi},[j]_{\Phi}\in J(\calL/\Phi)$ such that $i = \min [i]_{\Phi}$ and $j= \min [j]_{\Phi}$.
  If $[i]_{\Phi}\leq[j]_{\Phi}$ then $i \le j$, so $i \sqsubseteq j$ because $\sqsubseteq$
  extends $\le$ in $\calL$; finally, $i \sqsubseteq j$ implies $[i]_\Phi\preceq[j]_\Phi$ as we have seen at the
  beginning of the proof.

  Finally, let us show that $\preceq$ is the antilexicographic ordering of $\calL/\Phi$ with respect
  to the restriction of~$\preceq$ on~$J(\calL/\Phi)$. Let $J(\calL/\Phi) = \{[j_1]_\Phi, \ldots, [j_n]_\Phi\}$
  where $j_i = \min[j_i]_\Phi$ for all $i$. Without loss of generailty we can assume that
  $j_1 \sqsubseteq j_2 \sqsubseteq \ldots \sqsubseteq j_n$. Then, as we have seen at the beginning of the proof,
  $[j_1]_\Phi \preceq [j_2]_\Phi \preceq \ldots \preceq [j_n]_\Phi$, and Lemma~\ref{classic} yields
  $j_1, \ldots, j_n \in J(\calL)$. Take any $[x]_\Phi, [y]_\Phi \in \calL/\Phi$ and let
  $
    [x]_\Phi = \bigvee_i \epsilon_i \cdot [j_i]_\Phi
  $
  and
  $
    [y]_\Phi = \bigvee_i \delta_i \cdot [j_i]_\Phi
  $
  be the unique representations of $[x]_\Phi$ and $[y]_\Phi$ in $\calL/\Phi$ (where $\epsilon_i, \delta_i \in \{0,1\}$
  for all~$i$). Assume now that $[x]_\Phi \preceq [y]_\Phi$. Then
  $
    [x]_\Phi = \left[ \bigvee_i \epsilon_i \cdot j_i \right]_\Phi \preceq
    \left[ \bigvee_i \delta_i \cdot j_i \right]_\Phi = [y]_\Phi,
  $
  so
  $
    \left(\bigvee_i \epsilon_i \cdot j_i\right) - N(\Phi) \sqsubseteq \left(\bigvee_i \delta_i \cdot j_i\right) - N(\Phi).
  $
  Because of $j_i = \min[j_i]_\Phi$ for all $i$ we have that $\{j_1, \ldots, j_n\} \sec N(\Phi) = \0$, whence
  $\left(\bigvee_i \epsilon_i \cdot j_i\right) - N(\Phi) = \bigvee_i \epsilon_i \cdot j_i$ and
  $\left(\bigvee_i \delta_i \cdot j_i\right) - N(\Phi) = \bigvee_i \delta_i \cdot j_i$. Therefore,
  $\bigvee_i \epsilon_i \cdot j_i \sqsubseteq \bigvee_i \delta_i \cdot j_i$, so there is an $s$ such that
  $\epsilon_s < \delta_s$ and $\epsilon_i = \delta_i$ for all $i > s$.
  (It may happen that $\{j_1, \ldots, j_n\}$ is not the entire $J(\calL)$, but then we note that
  the join irreducibles $j \in J(\calL) \setminus \{j_1, \ldots, j_n\}$ do not appear in
  $\bigvee_i \epsilon_i \cdot j_i$, or more precisely, appear in this join as $0 \cdot j$, which does not
  affect the conclusion.)
\end{proof}

\begin{LEM}\label{rcl.lem.poscongr}
  $(a)$ The kernel of a positive homomorphism is a positive congruence.

  $(b)$ Every positive congruence is the kernel of a positive homomorphism.
\end{LEM}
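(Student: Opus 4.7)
For part $(a)$, let $f : \calL_\sqsubset \twoheadrightarrow \calK_\prec$ be a positive surjective homomorphism and put $\Phi = \ker f$. The first step is to observe that $N(\Phi) = N(f)$ directly from the definitions, since $(x,y) \in \Phi$ means $f(x) = f(y)$. Now take two distinct $\Phi$-classes $A = f^{-1}(\alpha)$ and $B = f^{-1}(\beta)$, with $\alpha \ne \beta$ in $K$. Linearity of $\prec$ gives, without loss of generality, $\alpha \prec \beta$. I then want to show that $a - N(\Phi) \sqsubset b - N(\Phi)$ for every $a \in A$, $b \in B$. Indeed, if we had $b - N(\Phi) \sqsubseteq a - N(\Phi)$ for some such pair, positivity of $f$ would yield $\beta = f(b) \preceq f(a) = \alpha$, contradicting $\alpha \prec \beta$; and the case $a - N(\Phi) = b - N(\Phi)$ is subsumed in this argument. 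Thus the required dichotomy for positive congruences holds with the $\sqsubseteq$ direction, uniformly for all representatives.

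For part $(b)$, let $\Phi$ be a positive congruence of $\calL_\sqsubset$. The natural candidate is the quotient map $f : \calL \twoheadrightarrow \calL/\Phi$. By Lemma~\ref{factor}, the rule $[x]_\Phi \preceq [y]_\Phi$ iff $(\forall a \in [x]_\Phi)(\forall b \in [y]_\Phi)\, a - N(\Phi) \sqsubseteq b - N(\Phi)$ defines a natural linear order on $\calL/\Phi$, so $(\calL/\Phi)_\preceq$ is a naturally ordered finite distributive lattice and $f$ is a surjective $\{0,1\}$-homomorphism with $\ker f = \Phi$. It remains to verify that $f$ is positive, and again $N(f) = N(\Phi)$ from the definitions.

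Assume $x - N(f) \sqsubseteq y - N(f)$; I want $[x]_\Phi \preceq [y]_\Phi$. If $[x]_\Phi = [y]_\Phi$ this is trivial, so suppose the classes are distinct. By positivity of $\Phi$, exactly one of the two alternatives in the definition holds; the favourable one gives $[x]_\Phi \preceq [y]_\Phi$ directly. The unfavourable alternative gives $y - N(\Phi) \sqsubseteq x - N(\Phi)$, which combined with the hypothesis forces $x - N(\Phi) = y - N(\Phi)$. Here comes the key step: Lemma~\ref{cerp.lem.MinusN} applied to $f$ yields $[x]_\Phi = [x - N(\Phi)]_\Phi = [y - N(\Phi)]_\Phi = [y]_\Phi$, contradicting distinctness. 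So only the favourable alternative occurs and $f$ is positive.

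The main obstacle, and the only place where some care is needed, is the coherence argument in $(a)$ that the strict inequality between $a - N(\Phi)$ and $b - N(\Phi)$ is independent of the choice of $a \in A$ and $b \in B$, and the symmetric step in $(b)$ where one uses Lemma~\ref{cerp.lem.MinusN} to collapse $x - N(\Phi) = y - N(\Phi)$ into $[x]_\Phi = [y]_\Phi$. Once those are in place everything reduces to unwinding the definitions.
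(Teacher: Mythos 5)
Your proof is correct and follows essentially the same route as the paper's: both parts rest on the identifications $N(\ker f)=N(f)$ and $N(\nat_\Phi)=N(\Phi)$, with $(a)$ a coherence argument combining positivity of $f$ with linearity of the orders, and $(b)$ the quotient map equipped with the order from Lemma~\ref{factor}. If anything, your part $(b)$ is slightly more careful than the paper's, since you explicitly rule out the unfavourable alternative of the positivity dichotomy by collapsing $x-N(\Phi)=y-N(\Phi)$ into $[x]_\Phi=[y]_\Phi$ via Lemma~\ref{cerp.lem.MinusN}, a step the paper's phrase ``by positivity of $\Phi$'' leaves implicit.
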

\begin{proof}
  (a) Let $\calL_\sqsubset$ and $\calK_\prec$ be finite distributive
  lattices and let $f:\calL\to\calK$ be a positive lattice
  homomorphism. Now, we have $N(\ker f)=\{x\in L : (\exists y\in L)(y<x \land
  f(x)=f(y))\}=N(f)$. Let $A$ and $B$ be two distinct
  classes of the congruence $\ker f$ and let $a\in A$ and $b\in B$.
  Since $\sqsubseteq$ is a linear order we have $a-N(\ker f)\sqsubseteq
  b-N(\ker f)$ or $b-N(\ker f)\sqsubseteq a-N(\ker f)$. Without loss of
  generality we can assume that $a-N(\ker f)\sqsubseteq b-N(\ker f)$. Then we have
  $a-N(f)\sqsubseteq b-N(f)$ and therefore $f(a)\preceq
  f(b)$, because $f$ is positive. Now, let $c\in A$ and $d\in B$. Our
  goal is to show that $c-N(\ker f)\sqsubseteq d-N(\ker f)$. Seeking a
  contradiction, suppose that $d-N(\ker f)\sqsubseteq c-N(\ker f)$. This
  yields $d-N(f)\sqsubseteq c-N(f)$. Then $f(d)\preceq f(c)$. From
  $c,a\in A$ and $d,b\in B$ we know $(c, a) \in \ker f$ and $(d, b) \in \ker f$.
  Hence, we have $f(c)=f(a)$ and $f(d)=f(b)$, whence $f(b)\preceq
  f(a)$. Therefore, $f(a)=f(b)$, whence $a,b\in A\sec B=\0$. Contradiction.

  (b) Let $\Phi$ be a positive congruence of $\calL_\sqsubset$. We
  know that $\Phi$ is the kernel of the natural homomorphism
  $\nat_{\Phi}:\calL \to \calL/\Phi : x \mapsto [x]_\Phi$.
  Let us prove that $\nat_{\Phi}$ is positive. Note first
  that $N(\nat_{\Phi})=\{x\in L : (\exists y\in L)(y<x \land
  \nat_{\Phi}(x)=\nat_{\Phi}(y))\}=\{x\in L : (\exists y\in
  L)(y<x \land [x]_{\Phi}=[y]_{\Phi})\}=\{x\in L : (\exists y\in
  L)(y<x \land (x,y)\in\Phi)\}=N(\Phi)$. Now, we take $a,b\in L$ such
  that $a-N(\nat_{\Phi})\sqsubseteq b-N(\nat_{\Phi})$. Then
  $a-N(\Phi)\sqsubseteq b-N(\Phi)$. By positivity of $\Phi$
  we obtain $[a]_\Phi\preceq[b]_\Phi$, where $\preceq$ is defined in
  Lemma~\ref{factor} and it is a natural linear ordering of $\calL/\Phi$
  that extends the lattice ordering of $J(\calL/\Phi)$. Therefore
  $\nat_{\Phi}(a)\preceq \nat_{\Phi}(b)$.
\end{proof}

Finally, we are ready to present the dual Ramsey property for naturally ordered finite distributive lattices.

\begin{THM}
    Let $\Con^+(\calL_\sqsubset)$ denote the set of positive congruences of a naturally ordered finite distributive
    lattice $\calL_\sqsubset$, and let
    $$
      \Con^+(\calL_\sqsubset, \calK_\prec) = \{ \Phi \in \Con^+(\calL_\sqsubset) :
      \calL_\sqsubset / \Phi \cong \calK_\prec \}.
    $$
    For every naturally ordered finite distributive lattice $\calL_\sqsubset$,
    every $\Phi \in \Con^+(\calL_\sqsubset)$ and
    every $k \ge 2$ there is a naturally ordered finite distributive lattice $\calN_\triangleleft$
    such that for every $k$-coloring of $\Con^+(\calN_\triangleleft, \calL_\sqsubset / \Phi)$
    there is a congruence $\Psi \in \Con^+(\calN_\triangleleft, \calL_\sqsubset)$
    such that the set of all the congruences from
    $\Con^+(\calN_\triangleleft, \calL_\sqsubset / \Phi)$ which contain $\Psi$ is monochromatic.
\end{THM}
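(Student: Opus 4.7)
The plan is to obtain this as a direct translation of the categorical dual Ramsey property (Corollary~\ref{rcl.cor.catram}) into the concrete language of positive congruences, exactly parallel to how the Finite Dual Ramsey Theorem was extracted from $\FinSetSurj^\op$ in Example~\ref{cerp.ex.FDRT} and how the statement for finite boolean algebras was unpacked in the Stone duality example. The setup: given $\calL_\sqsubset$ and $\Phi \in \Con^+(\calL_\sqsubset)$, Lemma~\ref{factor} makes $\calL_\sqsubset/\Phi$ into a naturally ordered finite distributive lattice (call it $\calK_\prec$), and Lemma~\ref{rcl.lem.poscongr}~$(b)$ guarantees $\nat_\Phi \in \hom_{\DD'}(\calL_\sqsubset, \calK_\prec)$, so this hom-set is non-empty. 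Corollary~\ref{rcl.cor.catram} then yields a naturally ordered finite distributive lattice $\calN_\triangleleft$ witnessing the dual Ramsey property for the pair $(\calK_\prec, \calL_\sqsubset)$, and the task is to rewrite what this says.

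The key rigidity remark is that every object of $\DD'$ has trivial automorphism group: a lattice automorphism permutes $J(\calL)$, but if it must additionally preserve the natural linear order it must fix the linear order on $J(\calL)$, and the only order-preserving self-bijection of a finite linearly ordered set is the identity. Consequently the equivalence $\equiv_{\calK_\prec}$ on $\hom_{\DD'}(\calN_\triangleleft, \calK_\prec)$ is trivial, so the classes ${\binom{\calN_\triangleleft}{\calK_\prec}}_{(\DD')^\op}$ are exactly the positive surjective homomorphisms $\calN_\triangleleft \twoheadrightarrow \calK_\prec$. Taking kernels then provides a bijection
\[
  \hom_{\DD'}(\calN_\triangleleft, \calK_\prec) \longrightarrow \Con^+(\calN_\triangleleft, \calK_\prec),
  \qquad f \longmapsto \ker f,
\]
whose surjectivity uses Lemma~\ref{rcl.lem.poscongr}~$(b)$ (any $\Theta \in \Con^+(\calN_\triangleleft, \calK_\prec)$ comes from $\nat_\Theta$ composed with the unique isomorphism $\calN_\triangleleft/\Theta \to \calK_\prec$, unique because $\calK_\prec$ is rigid) and whose injectivity uses rigidity of $\calK_\prec$ again.

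Under this bijection a $k$-coloring of $\Con^+(\calN_\triangleleft, \calK_\prec)$ is the same data as a $k$-coloring of ${\binom{\calN_\triangleleft}{\calK_\prec}}_{(\DD')^\op}$, so Corollary~\ref{rcl.cor.catram} produces a morphism $w \in \hom_{(\DD')^\op}(\calL_\sqsubset, \calN_\triangleleft) = \hom_{\DD'}(\calN_\triangleleft, \calL_\sqsubset)$, i.e.\ a positive surjective homomorphism $w: \calN_\triangleleft \twoheadrightarrow \calL_\sqsubset$, such that the image of $\hom_{\DD'}(\calL_\sqsubset, \calK_\prec)$ under precomposition with $w$ lies in a single color class. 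I then set $\Psi = \ker w \in \Con^+(\calN_\triangleleft, \calL_\sqsubset)$.

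The remaining — and only non-bookkeeping — step is to identify
\[
  \{\ker(f \circ w) : f \in \hom_{\DD'}(\calL_\sqsubset, \calK_\prec)\}
  \;=\;
  \{\Theta \in \Con^+(\calN_\triangleleft, \calK_\prec) : \Theta \supseteq \Psi\}.
\]
The inclusion $\subseteq$ is immediate from $\ker(f \circ w) = w^{-1}(\ker f) \supseteq \ker w = \Psi$ together with Lemma~\ref{rcl.lem.poscongr}~$(a)$ which guarantees $\ker(f \circ w)$ is positive. For $\supseteq$, given $\Theta \supseteq \Psi$ in $\Con^+(\calN_\triangleleft, \calK_\prec)$, the surjectivity of $w$ and the Second Isomorphism Theorem for lattices produce a congruence $\Theta/\Psi$ on $\calL_\sqsubset = \calN_\triangleleft/\Psi$ with $\calL_\sqsubset/(\Theta/\Psi) \cong \calN_\triangleleft/\Theta \cong \calK_\prec$; one checks $\Theta/\Psi$ is positive (this is the main technical point, using that both $\Theta$ and $\Psi$ are positive and that $w$ is positive, so that $N$-sets behave compatibly), and then the associated natural map $\calL_\sqsubset \to \calL_\sqsubset/(\Theta/\Psi) \cong \calK_\prec$ is a positive surjective homomorphism $f$ with $\ker(f \circ w) = \Theta$. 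Once this equality is established, monochromaticity of the left-hand set translates verbatim into monochromaticity of the set of $\Theta \in \Con^+(\calN_\triangleleft, \calK_\prec)$ that contain $\Psi$, which is exactly the desired conclusion.
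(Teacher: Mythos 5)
Your proposal is correct and follows essentially the same route as the paper: unpack Corollary~\ref{rcl.cor.catram} for the pair $(\calL_\sqsubset/\Phi,\ \calL_\sqsubset)$, use rigidity of naturally ordered finite distributive lattices to identify $\binom{\calN_\triangleleft}{\calK_\prec}_{(\DD')^{\op}}$ with $\Surj^+(\calN_\triangleleft,\calK_\prec)$, and then pass to positive congruences via Lemma~\ref{rcl.lem.poscongr}. You actually spell out more than the paper does --- in particular the two-sided identification of $\{\ker(f\circ w)\}$ with the positive congruences containing $\Psi$, which the paper compresses into ``the statement follows'' --- so the one step you leave as ``one checks'' is no less detailed than the published argument.
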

\begin{proof}
  The category $\DD'$ has the dual Ramsey property by Corollary~\ref{rcl.cor.catram}.
  Let us make this statement explicit having in mind that naturally ordered finite distributive lattices
  are rigid (have trivial automorphism groups).
  Let $\Surj^+(\calL_\sqsubset, \calK_\prec)$ denote the set of all positive surjective
  homomorphisms $\calL_\sqsubset \twoheadrightarrow \calK_\prec$. Then
  the fact that $\DD'$ has the dual Ramsey property means the following:
    for every integer $k \ge 2$ and all naturally ordered finite distributive lattices
    $\calL_\sqsubset$ and $\calK_\prec$ such that $\calL_\sqsubset \twoheadrightarrow \calK_\prec$
    there is a naturally ordered finite distributive lattice $\calN_\triangleleft$
    such that for every $k$-coloring
    $$
      \Surj^+(\calN_\triangleleft, \calK_\prec) = \calM_1 \union \ldots \union \calM_k
    $$
    there is an $i \in \{1, \ldots, k\}$ and a positive surjective homomorphism
    $w : \calN_\triangleleft \twoheadrightarrow \calL_\sqsubset$ satisfying
    $\Surj^+(\calL_\sqsubset, \calK_\prec) \circ w \subseteq \calM_i$.
    Since $\Surj^+(\calL_\sqsubset, \calK_\prec)$ corresponds to positive congruences $\Phi$ of $\calL_\sqsubset$
    such that $\calL_\sqsubset / \Phi \cong \calK_\prec$ (Lemma~\ref{rcl.lem.poscongr}), the statement follows.
\end{proof}

\end{document}